\documentclass{amsart}
\usepackage{amsthm,amsmath,amssymb, color, graphics, todonotes}
\usepackage{tikz-cd}
\usepackage{mathrsfs}
\usepackage{adjustbox}
\usepackage{hyperref}
 \usepackage{amsaddr}
 \usepackage{enumitem}

\newtheorem{thm}{Theorem}[section]
\newtheorem{claim}[thm]{Claim}

\newtheorem{lem}[thm]{Lemma}

\newtheorem{question}[thm]{Question}
\theoremstyle{definition}
\newtheorem{defn}[thm]{Definition}

\theoremstyle{remark}

\newcommand\ZZ{\mathbb{Z}}

\newcommand\Fin{\mathrm{Fin}}

\newcommand{\Pbb}{\mathbb{P}}
\newcommand{\Qbb}{\mathbb{Q}}

\newcommand{\Zbb}{\mathbb{Z}}

\newcommand\Acal{\mathcal{A}}

\newcommand\Abf{\mathbf{A}}

\newcommand\Hbb{\mathbb{H}}

\newcommand\dom{\operatorname{dom}}

\title{Merging $\lim^1 \mathbf{A} \ne 0$ with other nonvanishing constructions}
\author{Nathaniel Bannister}
\author{Justin Tatch Moore}

\thanks{
The authors would like to thank
Stevo Todorcevic for offering useful comments on an early draft of this paper.
The research of the second author is supported in part by NSF grant DMS-2451350.}

\begin{document}

\begin{abstract}
We develop methods for forcing $\lim^1 \Abf \ne 0$,
where $\Abf$ is a particular inverse system of 
abelian groups introduced by
Marde\v{s}i\'c and Prasolov in their computation of certain
strong homology groups. 
These methods allow us to extend
previous nonvanishing results of
Casarosa and Lambie-Hanson for $\lim^k \Abf$ for
$k \geq 2$.
Specifically we show that,
for a given $n$, it is relatively consistent with
ZFC that 
$\mathfrak{b} = \mathfrak{d} = \omega_n$ and 
$\lim^k \Abf \ne 0$ whenever $1 \leq k \leq n$ (previously established with $2 \leq k \leq n$).
We also show it is relatively consistent with ZFC that $\mathfrak{b} = \mathfrak{d} = \omega_{\omega+2}$ and
$\lim^k \Abf  \ne 0$ for all $k \geq 1$ (previously
established with $k \geq 2$).
We also adapt proofs of Kamo to show that
$\lim^1 \Abf = 0$ holds in many
finite support iterated forcing
extensions.
\end{abstract}

\maketitle

\section{Introduction} 
This paper contributes to a growing body of research on the interactions of set theory and homological algebra, and in particular to the derived functors of the inverse limit functor. 
We isolate several situations in which the first derived limit of a particular system $\mathbf{A}$ does and does not vanish. 
The derived limits of this system play a role in several fields of mathematical research; to give two examples:
\begin{itemize}
    \item In \cite{SHINA}, Marde\v{s}i\'c and Prasolov isolate the system $\mathbf{A}$ and show that if \emph{strong homology} is additive on the class of closed subspaces of $\mathbb{R}^n$, then $\lim^n\mathbf{A}=0$ for all $n>0$;
    \item Clausen and Scholze have shown that $\lim^n\mathbf{A}[H]=0$ for all $n>0$ is equivalent to a statement about derived functors in condensed abelian groups (see, for instance, \cite[Lecture 4]{AnStacks}).
\end{itemize}
The focus of the early research in the late 1980s and 1990s was on the \emph{first} derived limit of $\mathbf{A}$; see, for instance, \cite{SHINA, DSV, Kamo, To98}. 
The past decade has seen renewed interest in the system $\mathbf{A}$, with a particular focus on the derived limits $\lim^n\mathbf{A}$ for $n\geq 2$, including \cite{B17, SVHDL, strong_hom_add, SVHDLwLC, NVHDL, SNVHDL, ADLCM}. 
A number of questions have remained open about the derived limits of $\mathbf{A}$. 
One of these is the following, which is the main motivation for this paper: 
\begin{question} \label{SNV-question}
    Let $X$ be a set of positive integers. 
    Is there a model of ZFC in which $\lim^n\mathbf{A} \ne 0$ if and only if $n\in X$?
\end{question}
In \cite{SNVHDL}, Casarosa and Lambie-Hanson prove several special cases of Question \ref{SNV-question}, notably the cases of intervals $X=[2,n]$ for any $n\geq 2$ and $X=[2,\infty)$. 
The first sections of this paper are devoted to constructing several forcings after which $\lim^1\mathbf{A}\neq0$ and which are compatible with their constructions. 
We then turn to establishing a robust class of forcings for making $\lim^1\mathbf{A}=0$, extending work of Kamo \cite{Kamo}. 

We now briefly discuss some of the methods and the structure of the paper. 
In section \ref{nonlin-section}, we define a nonlinear Hechler-style iteration to show the following result:
\begin{thm} \label{nonlinear-thm}
    Let $D$ be a well-founded partial ordering such that any countable subset of $D$ has a strict upper bound. 
    There is a ccc forcing extension in which $\lim^1\mathbf{A}\neq0$ and $(\omega^\omega,<^*)$ has a cofinal suborder isomorphic to $D$. 
\end{thm}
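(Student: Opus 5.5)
We build a finite support ``Hechler iteration along $D$'' that adds, alongside the dominating reals indexed by $D$, a coherent family witnessing $\lim^1\Abf\ne0$. We use the standard reformulation: $\lim^1\Abf\ne 0$ iff there is a nontrivial coherent family $\langle \varphi_f : f\in\omega^\omega\rangle$. Here each $\varphi_f : I(f)=\{(i,j): j\le f(i)\}\to\Zbb$, the family is coherent in that $\varphi_f$ and $\varphi_g$ agree on $I(f)\cap I(g)$ modulo finitely many exceptions, and it is nontrivial in that no single $\psi:\omega\times\omega\to\Zbb$ agrees mod finite with every $\varphi_f$. Coherence and nontriviality along a cofinal subfamily suffice. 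So it is enough to produce $\langle \varphi_d : d\in D\rangle$ with $\langle f_d : d\in D\rangle$ cofinal in $(\omega^\omega,<^*)$ and order-isomorphic to $D$.

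\textbf{Construction.} Define $\Pbb_D$ as follows. A condition $p$ has finite support $F_p\subseteq D$, and for each $d\in F_p$ it consists of:
\begin{itemize}[label={}]
\item a stem $s^p_d\in\omega^{<\omega}$ of length $n^p_d$, together with a finite partial function approximating $\varphi_d$ on $I(s^p_d)$;
\item a Hechler side condition $\dot g^p_d$, a $\Pbb_{<d}$-name for a real (where $\Pbb_{<d}$ is the restriction to $\{e: e<d\}$).
\end{itemize}
Order is the usual Hechler extension at each coordinate, with two additional rules. When $e<d$ both lie in the support, extending $s_d$ must dominate $f_e$ above the stem. Newly decided values of $\varphi_d$ on $I(f_d)\cap I(f_e)$ outside the existing finite domain must agree with $\varphi_e$. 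Define $\Pbb_D$ by recursion on the well-founded rank of $D$, as in nonlinear (template-style) Hechler iterations. Well-foundedness makes the definition legitimate, and $\Pbb_{<d}\lessdot\Pbb_D$.

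\textbf{Key steps.}
\begin{enumerate}[label=(\arabic*)]
\item Show $\Pbb_D$ is $\sigma$-centered, hence ccc, provided $|D|\le\mathfrak c$. Otherwise use ccc via a $\Delta$-system argument: conditions with the same stems on a common root and compatible finite parts are compatible. The agreement constraints affect only finitely many pairs, so amalgamation goes through.
\item By genericity, $f_e<^*f_d$ for $e<d$. Each $f_d$ dominates the ground-model reals and all reals added by $\Pbb_{<d}$.
\item Cofinality of $\{f_d\}$: any real is added by a countable subforcing supported on a countable $C\subseteq D$. The hypothesis gives $d$ above $C$, and $f_d$ dominates that real. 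Incomparable $d,d'$ must yield $f_d,f_{d'}$ that are $<^*$-incomparable. This follows by genericity: mutually generic Hechler coordinates are incomparable. Hence the order is isomorphic to $D$.
\item Coherence of $\langle\varphi_d\rangle$ follows from the agreement rule.
\end{enumerate}

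\textbf{Main obstacle: nontriviality.} Suppose $\dot\psi$ is a name trivializing the family. Its values are decided by countably many antichains with countable union of supports $C$. Choose $d$ strictly above $C$. Since the $f_{d'}$ are cofinal, $\varphi_d$ must agree with $\psi$ mod finite on $I(f_d)$. Fix a condition $p$ forcing this beyond some $N$. Extend the stem of $d$ to some $n\ge N$, and pick a new point $(n,j)$ with $j$ larger than all $f_e(n)$ for $e\in F_p\setminus\{d\}$ that are decided. This point lies in $I(f_d)$ but in no other $I(f_e)$ constrained by $p$. Define $\varphi_d(n,j)$ to differ from the value of $\psi(n,j)$ decided by a condition in $\Pbb_C$, which is compatible since $d\notin C$ and the conditions amalgamate. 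This gives a contradiction.

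The delicate point is ensuring $j$ avoids every $I(f_e)$ for the finitely many $e\in F_p$ whose values at $n$ must be constrained, including $e>d$. To handle it, I would impose a rule that higher coordinates only constrain $\varphi$ on their own new stem values. With that rule, the freedom at $d$'s fresh points is genuine.
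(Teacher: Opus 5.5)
Your architecture is the paper's: a Hechler-type iteration defined by recursion along the well-founded $D$, ccc by a $\Delta$-system, cofinality by bounding the countable support of a name, and nontriviality by diagonalizing at a coordinate above the supports of $\dot\psi$. However, the one component with real content is left unspecified, namely which disagreements between the $\varphi_d$ are permitted. And the nontriviality step as written asks for something that cannot be had.

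\textbf{Why your coherence rule fails.} Your approximations to $\varphi_d$ live only on $I(s^p_d)$. So a condition cannot record that a value already decided for $\varphi_d$ must later be matched by $\varphi_e$ once $e$'s stem reaches that column. Your rule therefore fails under either natural reading.
\begin{itemize}
\item If points already in some approximation's domain are exempt, then, given any condition and any $N$, one can push $e$'s stem past a column $n\ge N$, decide $\varphi_e(n,0)$, and only then extend $d$'s stem and set $\varphi_d(n,0)\ne\varphi_e(n,0)$. By density the generic family is not coherent.
\item If instead all pairs in the extended support are constrained at newly decided points, look at the condition in $\mathbb{P}_C$ deciding $\psi(n,j)$. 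It depends on $j$, so you cannot choose $j$ afterwards. It may add coordinates $e<d$ whose stems cover $(n,j)$, which pins $\varphi_d(n,j)$ to a value $\varphi_e(n,j)$ that may well equal $\psi(n,j)$. Your claim that ``the conditions amalgamate'' then fails.
\end{itemize}

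\textbf{Why your fix for the delicate point cannot work.} The avoidance you flag is impossible under your own rule that extensions of $s_e$ dominate $f_d$ when $d<e$. If $n$ lies beyond $e$'s stem, then $(n,j)\in I(f_d)$ forces $(n,j)\in I(f_e)$. Your proposed fix, ``higher coordinates only constrain $\varphi$ on their own new stem values'', is too vague to check against either coherence or nontriviality.

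\textbf{The missing bookkeeping.} The paper handles this as follows.
\begin{itemize}
\item A condition carries a single finite $F_p\subseteq\omega\times\omega$. Every $b\in D_p$ carries a value $\Phi_{b,p}$ on all of $F_p$, including at points beyond its stem.
\item When passing from $q$ to $p\le q$, at each new point $(n,m)\in F_p\setminus F_q$, any two coordinates $b,c$ of the \emph{old} support $D_q$ must receive equal values. The only exception is when a stem of $p$ already places $(n,m)$ outside $I(x_b)$ or $I(x_c)$.
\end{itemize}
This gives coherence outright: $p$ forces that $\Phi_{x_b}$ and $\Phi_{x_c}$ agree on their common domain outside $F_p$. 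It also leaves coordinates that are new to the support unconstrained at the new points.

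\textbf{How nontriviality then goes.} Let $a\in D_p$ be above the supports of the antichains deciding $\Psi$.
\begin{enumerate}
\item Take a fresh column $n$ beyond all stems of $p$.
\item Find $r\le p\upharpoonright(D/a)$ deciding $\dot f_{a,p}\upharpoonright(n+1)$ and pushing the stems of the old $b\in D_p\cap(D/a)$ past $n$.
\item Let $k$ exceed every $s_{b,r}(n)$.
\item Extend $r$ to decide $\Psi(n,k)$ with $(n,k)\in F_r$.
\end{enumerate}
Now check what constrains the value of $a$ at $(n,k)$:
\begin{itemize}
\item Coordinates added by $r$ impose nothing on $a$ there, because they are not in $D_p$.
\item The old coordinates below $a$ impose nothing, because their stems exclude $(n,k)$.
\item The coordinates $c\in D_p$ with $c\not<a$ are untouched by $r$.
\end{itemize}
So you do not try to keep $(n,k)$ out of $I(x_c)$. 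You simply give $a$ and all such $c$ one common value different from the value $r$ decides for $\Psi(n,k)$.
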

When $D$ is a wellorder, this can be
viewed as a variation on Laver's
method for producing a model
in which the continuum is large and
$(\omega^\omega,<^*)$ contains
a saturated linear order of cardinality
$2^{\aleph_0}$ \cite{Laver}.
We remark here that the well-foundedness of $D$ is likely unnecessary using machinery similar to that of Hechler's original paper \cite{Hechler}. 
In section \ref{snv_section}, we outline how to combine the forcing we construct in section \ref{nonlin-section} with the weak diamond and square hypotheses used in \cite{SNVHDL} to obtain the following:
\begin{thm}
    Relative to the consistency of ZFC, it is consistent that $\mathfrak{b}=\mathfrak{d}=\omega_{n}$ and $\bigwedge_{1\leq k\leq n}\lim^k\mathbf{A}\neq0$. 
    Relative to the consistency of ZFC, it is consistent that $\mathfrak{b}=\mathfrak{d}=\omega_{\omega+2}$ and $\bigwedge_{1\leq k<\omega}\lim^k\mathbf{A}\neq0$. 
\end{thm}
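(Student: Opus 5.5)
The plan is to build on the models of Casarosa and Lambie-Hanson \cite{SNVHDL}. There, $\lim^k\Abf\ne0$ for $2\le k\le n$ (respectively for all $k\ge 2$) is obtained by adding many Cohen reals over a ground model satisfying suitable weak diamond and square hypotheses at $\omega_1,\dots,\omega_{n}$ (respectively up to $\omega_{\omega+1}$). The relevant nontrivial coherent families are then built from the generic reals, indexed by a cofinal copy of $\omega_n$ (respectively $\omega_{\omega+2}$) in $(\omega^\omega,<^*)$. I would replace the Cohen (or Hechler) part of their construction with the forcing from Theorem \ref{nonlinear-thm}, taking $D=\omega_n$ (respectively $D=\omega_{\omega+2}$). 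This $D$ is a well-order in which every countable subset has a strict upper bound, because $\operatorname{cf}(D)>\omega$. The resulting extension will have a cofinal well-ordered $<^*$-scale of length $D$, hence $\mathfrak b=\mathfrak d=\omega_n$ (respectively $\omega_{\omega+2}$), and it will have $\lim^1\Abf\ne0$.

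The key steps, in order, are as follows.

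\textbf{Step 1.} Force (or assume, via a preliminary ccc or GCH-type preparation as in \cite{SNVHDL}) the ground model hypotheses: the weak diamond principles on the relevant cofinalities and the square sequences $\square_{\omega_k}$ for $k<n$ (respectively $k\le\omega$). These are obtainable from ZFC, e.g.\ in $L$ or after a standard preparation. The continuum should also be small, so that the later iteration has length exactly $\omega_n$.

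\textbf{Step 2.} Perform the nonlinear Hechler-style iteration of Theorem \ref{nonlinear-thm} along $D$. This gives $\lim^1\Abf\neq0$ and the cofinal scale of order type $D$.

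\textbf{Step 3.} Verify that the argument of \cite{SNVHDL} for $\lim^k\Abf\ne0$, $k\ge2$, goes through in this extension. Their argument uses only two things: a sufficiently generic cofinal scale indexed by $\omega_n$, along whose initial segments one can diagonalize against potential trivializations, and the preservation of the combinatorial hypotheses (weak diamond and nontrivial coherent families on the $\omega_k$ for $k<n$) by ccc forcing of the relevant size. I would check that the iteration of Theorem \ref{nonlinear-thm} adds Cohen reals cofinally (Hechler reals add Cohen reals) and that the ccc preserves the square sequences. I would then rerun their inductive construction of nontrivial $k$-coherent families, with the Hechler generics at stage $\alpha$ playing the role of their Cohen generics.

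The main obstacle is Step 3. The nonvanishing arguments of \cite{SNVHDL} rely on homogeneity and automorphism properties of Cohen forcing, or on a specific factorization of the forcing at intermediate stages, to guess trivializations using weak diamond. A Hechler-style iteration lacks some of this homogeneity. What I would try first is to show that each tail of the iteration, restricted to an initial segment $D\restriction\alpha$ with $\operatorname{cf}(\alpha)=\omega_k$, factors as the forcing up to $\alpha$ followed by a ccc quotient. This quotient still adds generic objects over the intermediate model, so that the diagonalization step applies. Concretely, I would isolate which property of the forcing the proof in \cite{SNVHDL} actually uses, for instance a weak diamond guessing argument that works for any ccc forcing of size $\omega_k$ that adds dominating-type reals cofinally. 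I would then verify that property directly for the poset of Theorem \ref{nonlinear-thm}.
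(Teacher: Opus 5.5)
There is a genuine gap, in two places.

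\textbf{The weak diamond hypotheses.} Your Step 1 would fail as stated. You propose to secure $w\diamondsuit$ in the ground model (e.g.\ in $L$) and let it survive the ccc forcing. But ccc forcing that enlarges the continuum destroys weak diamond. The principle $w\diamondsuit(\omega_k)$ implies $2^{\aleph_{k-1}}<2^{\aleph_k}$. Any ccc forcing of size $\aleph_n$ over $L$ that makes $\mathfrak d=\aleph_n$, such as $\mathbb{P}_{\omega_n}$, yields $2^{\aleph_0}=2^{\aleph_1}=\cdots=2^{\aleph_{n-1}}=\aleph_n$. So $w\diamondsuit(\omega_k)$ fails for $1\le k<n$ in your final model.

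What the final model needs is a strictly increasing continuum function $2^{\aleph_0}=\aleph_n<2^{\aleph_1}<\cdots<2^{\aleph_n}$. Keeping the ground-model continuum small does nothing toward that. The paper instead forces over $L$ with the product $\mathbb{P}_{\omega_n}\times\mathbb{C}_1\times\cdots\times\mathbb{C}_n$. Here $\mathbb{C}_k=\operatorname{Fn}(\omega_{n+k},2,\omega_k)$ is countably closed and makes $2^{\aleph_k}=\aleph_{n+k}$. Easton's lemma then shows that every real of the final model is added by $\mathbb{P}_{\omega_n}$. Hence the closed factors leave untouched both the scale of type $\omega_n$ (so $\mathfrak b=\mathfrak d=\omega_n$) and the nontriviality of the generic $1$-coherent family. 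The $\omega_{\omega+2}$ case uses the analogous product with closed factors.

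\textbf{The higher derived limits.} Your Step 3 is left as an unresolved obstacle, so the proposal does not actually establish $\lim^k\Abf\neq0$ for $k\ge2$. Moreover, the obstacle you anticipate is not real. You do not need to rerun the construction of \cite{SNVHDL} with Hechler generics in place of Cohen generics. No homogeneity or factorization property of $\mathbb{P}_{\omega_n}$ is used.

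The paper applies \cite[Lemma 5.7]{SNVHDL} as a black box. Its inputs are $\mathfrak b=\mathfrak d=\omega_n$, which you already get from Theorem \ref{nonlinear-thm}, together with an appropriate combination of $w\diamondsuit$ and $\square$ principles. Verifying those principles in $L^{\mathbb{P}_{\omega_n}\times\prod_k\mathbb{C}_k}$ is identical to the proof of \cite[Lemma 5.6]{SNVHDL}. The square sequences come from $L$, and the weak diamonds come from the cardinal arithmetic arranged by the closed factors; neither depends on which ccc poset produced the reals. Once the model is built as this product, $\lim^1\Abf\ne0$ comes from Theorem \ref{nonlinear-thm}, and the nonvanishing of the higher limits follows directly from the cited lemma.
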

Section \ref{trivializing-freezing-section} relates 1-coherent families to gaps in $\mathscr{P}(\omega)/\mathrm{fin}$ and
then records lemmas about trivializing 1-coherent families and making them indestructible by forcing.
This follows from Kunen's unpublished analysis of the analogous phenomena for gaps (see \cite{Baumgartner:PFA}).
We then use these lemmas in section \ref{lin-section} to construct a linear iteration for making $\mathfrak{d}$ arbitrarily large while also forcing $\lim^1\mathbf{A}\neq0$.

The final two sections are devoted to adapting proofs due to Kamo \cite{Kamo} to show that in many finite support
iterated forcing extensions, $\lim^1\mathbf{A}=0$.  
In section \ref{knaster-section}, use a result on freezing 1-coherent families from section \ref{trivializing-freezing-section} to show the following: 
\begin{thm}
    Suppose $\operatorname{cof}(\kappa)>\aleph_1$ and $\langle\Pbb_i,\dot\Qbb_i\mid i<\kappa\rangle$ is a length $\kappa$ finite support iteration of nontrivial Knaster forcings. 
    Then $\Pbb_\kappa$ forces $\lim^1\mathbf{A}=0$. 
\end{thm}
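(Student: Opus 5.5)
Fix a $\Pbb_\kappa$-name $\dot\Phi=\langle \dot\varphi_f \mid f\in\omega^\omega\rangle$ for a $1$-coherent family indexed by $\omega^\omega$: each $\varphi_f\colon I(f)=\{(i,j): j\le f(i)\}\to\Zbb$, and $\varphi_f =^* \varphi_g$ on $I(f)\cap I(g)$. We must show that $\dot\Phi$ is forced to be trivial. The plan follows Kamo's argument, with the freezing lemma from Section~\ref{trivializing-freezing-section} taking the place of his combinatorial core.

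\textbf{Step 1: locate a stage where a cofinal part of the family already lives.} Since $\operatorname{cof}(\kappa)>\aleph_1$ and the iteration has finite support with ccc iterands, every real in the final model appears at some stage below $\kappa$. Each $\varphi_f$ is coded by a real with a countable name. Because the iterands are nontrivial, they add Cohen reals at cofinally many stages, and so $\mathfrak b\ge\operatorname{cof}(\kappa)$ in the extension; it is at least $\aleph_2$. A standard reflection (closing-off) argument should then produce a stage $\delta<\kappa$ of uncountable cofinality, or more simply a set $\mathcal F\subseteq\omega^\omega\cap V^{\Pbb_\delta}$, with the following properties: $\mathcal F$ is $<^*$-cofinal in $\omega^\omega\cap V^{\Pbb_\delta}$, every $\dot\varphi_f$ with $f\in\mathcal F$ is a $\Pbb_\delta$-name, and $\Phi\restriction\mathcal F$ is $1$-coherent in $V^{\Pbb_\delta}$. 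I take $\delta$ of cofinality $\omega_1$, obtained by iterating $\omega_1$ times the operation of capturing names.

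\textbf{Step 2: show $\Phi\restriction\mathcal F$ is already trivial in $V^{\Pbb_\delta}$.} Suppose it were not. The freezing lemma of Section~\ref{trivializing-freezing-section} is the analogue of Kunen's result that an $(\omega_1,\omega_1)$-gap can be made indestructible by Knaster forcing. From a nontrivial family of this kind one extracts an $\omega_1$-sequence $f_\alpha$, increasing and cofinal in $\mathcal F$ (using $\operatorname{cof}\delta=\omega_1$), whose restriction is nontrivial, and one freezes it. The expected form of the lemma is that after a Knaster forcing, here the next iterand or the tail $\Pbb_\kappa/\Pbb_\delta$, the subfamily $\langle\varphi_{f_\alpha}\rangle$ remains nontrivial. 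Note that finite support iterations of Knaster posets are Knaster, which is what lets this apply to the tail. On the other hand, the Cohen real added at stage $\delta$ is unbounded over $V^{\Pbb_\delta}$, so $\{f_\alpha\}$ is no longer cofinal in the final model. I then need a way to play these two facts against each other. Kamo's idea is that some $\varphi_g$ with $g$ not dominated by the $f_\alpha$ must cohere with all of them, and combined with the Cohen genericity of $g$ this forces $\langle\varphi_{f_\alpha}\rangle$ to be trivial. That contradicts freezing, so $\Phi\restriction\mathcal F$ was trivial at stage $\delta$.

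\textbf{Step 3: propagate triviality.} If $\psi$ trivializes $\Phi\restriction\mathcal F$, then by coherence $\psi$ trivializes every $\varphi_g$ with $g$ bounded by some member of $\mathcal F$. Since the argument works for club-many $\delta$ of cofinality $\omega_1$, the resulting trivializations must agree mod finite on overlaps. A final closing-off argument over these $\delta$ then yields a single $\psi$ that trivializes the whole family.

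\textbf{Main obstacle.} I expect Step 2 to be the hardest part, specifically showing that a frozen nontrivial $\omega_1$-subfamily contradicts coherence with the new functions $\varphi_g$. My first attempt would be a $\Delta$-system and Knaster thinning. Pick conditions $p_\alpha$ deciding $\varphi_g\restriction I(f_\alpha)$ up to a fixed finite error $n$ and a fixed finite part. Thin to an uncountable linked family. Then define $\psi$ in $V^{\Pbb_\delta}$ by letting $\psi(i,j)$ be the common decided value. Linkedness makes $\psi$ well defined, and it trivializes $\langle\varphi_{f_\alpha}\rangle$, contradicting the freezing.
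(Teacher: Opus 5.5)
Your overall architecture (reflect to stages of cofinality $\omega_1$, kill nontriviality there with Knaster thinning against a new generic real, then glue) matches the paper's. However, two steps do not go through as stated.

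\textbf{Step~2.} It depends on an $\omega_1$-sequence $\langle f_\alpha\rangle$ that is $<^*$-increasing and cofinal in $\omega^\omega\cap V^{\Pbb_\delta}$. The assumption $\operatorname{cof}(\delta)=\omega_1$ does not provide one.

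\begin{itemize}
\item \emph{Counterexample.} If every $\dot\Qbb_i$ is $\operatorname{Fn}(\kappa^+,2)$, then each $V^{\Pbb_\delta}$ ($\delta\ge 1$) is a Cohen extension with $\mathfrak b=\aleph_1$ and $\mathfrak d\ge\kappa^+$. So no such sequence exists.
\item \emph{Step~1 suffers too.} A cofinal $\mathcal F$ then has size at least $\kappa^+$, so there is no reason your closing-off below $\kappa$ captures its names.
\item \emph{Side error.} Your claim $\mathfrak b\ge\operatorname{cof}(\kappa)$ is false: cofinally many Cohen reals raise $\mathfrak d$, not $\mathfrak b$.
\item \emph{What survives.} Without cofinality, your linked thinning only trivializes $\langle\varphi_{f_\alpha}\mid\alpha\in A\rangle$ for some uncountable $A$, which contradicts nothing.
\item \emph{Freezing.} It is not available for an arbitrary nontrivial $\omega_1$-family. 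Lemma~\ref{indestructible_lem} needs a hereditary nontriviality hypothesis and an extra ccc forcing, which you cannot insert into the given iteration.
\end{itemize}

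The paper avoids all of this by indexing by Cohen reals $y_i$, coded by which member of a maximal antichain of $\dot\Qbb_{\omega i+n}$ enters the generic.
\begin{itemize}
\item $\{y_i\mid i<\kappa\}$ is everywhere unbounded, so by Lemma~\ref{unbdd_lem} it suffices to trivialize $\Phi\upharpoonright\{y_i\mid i<\kappa\}$.
\item That family has only $\kappa$ indices, which is what makes the club $C$ exist.
\item For $\alpha\in C$ of cofinality $\omega_1$ with $\beta_j$ cofinal in $\alpha$, every uncountable set of the $y_{\beta_j}$ (or of finite meets of them) is everywhere unbounded in $V[G\upharpoonright\alpha]$.
\item This supplies exactly the hereditary nontriviality that Lemma~\ref{no_ext_lem} needs against the next coded real $y_\alpha$.
\end{itemize}

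Your thinning also glosses a real point. A common extension of $p_\alpha,p_\beta$ only yields agreement on $I(g)\cap I(f_\alpha)\cap I(f_\beta)$. To get agreement past a uniform column, you must further force $g(i)$ large, which requires the common extension to leave the coding coordinates free. This is why Lemma~\ref{no_ext_lem} separates the $\omega$-block coding $y$ from the Knaster remainder: compatibility of conditions with support $R$ is then witnessed inside $\Pbb_{\max(R)+1}$.

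\textbf{Step~3.} You assert that trivializations for different $\delta$ agree mod finite, but nothing forces this. Suppose a real $z$ dominating $V^{\Pbb_\delta}$ is added before $\delta'$ (Hechler iterands are Knaster). Then $\psi_\delta$ and $\psi_\delta+\chi_z$, with $\chi_z$ the indicator of the graph of $z$, both trivialize $\Phi\upharpoonright\mathcal F_\delta$ in $V^{\Pbb_{\delta'}}$, yet they differ in every column.

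The missing idea is a pressing-down argument:
\begin{itemize}
\item For $\alpha\in C$ of cofinality $\omega_1$, the trivialization of $\Phi^\alpha$ is a real in $V[G\upharpoonright\alpha]$, hence lies in $V[G\upharpoonright\beta_\alpha]$ for some $\beta_\alpha<\alpha$.
\item Fodor's lemma gives a stationary $S$ and a single $\beta$ that works for every $\alpha\in S$.
\item For $\beta<\gamma<\delta$ in $S$, both $\Psi^\gamma$ and $\Psi^\delta$ lie in $V[G\upharpoonright\beta]$. Both trivialize $\Phi^\gamma$, whose index set contains $y_\beta$, and $y_\beta$ is Cohen over $V[G\upharpoonright\beta]$.
\item Hence $\Psi^\gamma=^n\Psi^\delta$ for some $n$, and any single $\Psi^\gamma$ trivializes $\Phi\upharpoonright\{y_i\mid i<\kappa\}$.
\end{itemize}
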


\section{A nonlinear iteration} \label{nonlin-section}
Let $D$ be a well-founded partial ordering with $\mathfrak{b}(D)\geq\aleph_1$. We let $D^+$ be $D$ with a new maximum element $+$. 
For $a\in D$, define $D/a$ as $\{b\in D\mid b<a\}$. 
\begin{defn} \label{forcing_def}
    For $a\in D^+$, define $\mathbb{P}_a$ recursively as follows. 
    A condition $p$ consists of
    \begin{itemize}
        \item A finite $D_p\subseteq D/a$;
        \item A finite $F_p\subseteq \omega\times\omega$; 
        \item For each $b\in D_p$,
        \begin{itemize}
            \item an $s_{b,p}\in \omega^{<\omega}$; 
            \item a $\Phi_{b,p}\colon F_p\to\mathbb{Z}$;
            \item a $\mathbb{P}_b$-name $\dot f_{b,p}$ for an element of $\omega^\omega$.
        \end{itemize}
    \end{itemize}
    We set $p\leq q$ if and only if
    \begin{enumerate} \label{ordering_requirements}
        \item $D_p\supseteq D_q$;
            \item $F_p\supseteq F_q$;
        \item For every $b\in D_q$:
        \begin{enumerate}
            \item $s_{b,p}\sqsupseteq s_{b,q}$;
            \item $\Phi_{b,p}\sqsupseteq\Phi_{b,q}$;
            \item $p\upharpoonright D/b$ forces in 
            $\mathbb{P}_b$ that
            \begin{itemize}
                \item $\dot f_{b,p}\geq\dot f_{b,q}$;
                \item $\forall n\in \dom(s_{b,p})\setminus\dom(s_{b,q})(s_{b,p}(n)\geq \dot f_{b,q}(n))$
            \end{itemize}
            
        \end{enumerate}
        \item For all $b,c\in D_q$ and $(n,m)\in F_p\setminus F_q$ such that both
            \begin{itemize}
                \item $n\not\in \dom(s_{p,c})$ or $m\leq s_{p,c}(n)$ and 
                \item $n\not\in \dom(s_{p,b})$ or $m\leq s_{p,b}(n)$,
            \end{itemize}
            we have
            \[\Phi_{b,p}(n,m)=\Phi_{c,p}(n,m).\]
    \end{enumerate}
\end{defn}
We will show that $\mathbb{P}_+$ is a ccc forcing which adds a cofinal suborder of $(\omega^\omega,<^*)$ isomorphic to $D$ and a nontrivial coherent family indexed by that cofinal suborder. 
In particular, $\mathbb{P}_+$ forces $\lim^1\mathbf{A}\neq0$.
\begin{lem} \label{ccc}
    For any $a\in D^+$, $\mathbb{P}_+$ is ccc.
\end{lem}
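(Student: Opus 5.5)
We read the statement as ``for any $a\in D^+$, $\mathbb{P}_a$ is ccc,'' and prove it by well-founded induction on $a\in D^+$. The induction hypothesis makes each $\mathbb{P}_b$ ($b<a$) a well-defined partial order, so clause (3c) is meaningful. Fix an uncountable family $\{p_\xi:\xi<\omega_1\}\subseteq\mathbb{P}_a$; we must find two compatible members. The names $\dot f_{b,p}$ carry no finite combinatorial information, so the argument is a $\Delta$-system and counting argument on the finite parts, followed by an explicit amalgamation.

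\textbf{Thinning out.} First apply the $\Delta$-system lemma to the $D_{p_\xi}$, getting a root $R$. Apply it again to the $F_{p_\xi}$, getting a root $E$. Since $\omega^{<\omega}$ and the set of finite partial functions $\omega\times\omega\to\ZZ$ are countable, we may also assume the following for $\xi<\eta$: $s_{b,p_\xi}=s_b$ for every $b\in R$; $F_{p_\xi}$ and $F_{p_\eta}$ are isomorphic via a map fixing $E$; and $\Phi_{b,p_\xi}\restriction E=\Phi_b^E$ is independent of $\xi$ for $b\in R$. We also arrange that the values $\Phi_{b,p_\xi}$ take on $F_{p_\xi}\setminus E$ are the same under this isomorphism. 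Finally, let $N$ exceed every $s_b$ length and every coordinate of $E$.

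\textbf{Amalgamation.} Given $\xi<\eta$, define $r$ as follows. Set $D_r=D_{p_\xi}\cup D_{p_\eta}$ and $F_r=F_{p_\xi}\cup F_{p_\eta}$. Let $s_{b,r}$ be the common $s_b$ for $b\in R$, and otherwise the stem from whichever condition contains $b$. For the names, take $\dot f_{b,r}$ to be a $\mathbb{P}_b$-name for the pointwise maximum $\max(\dot f_{b,p_\xi},\dot f_{b,p_\eta})$ when $b\in R$, and otherwise the name from the single condition containing $b$.

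\textbf{Extending the $\Phi$'s.} The $\Phi_{b,r}$ must be defined on all of $F_r$, and this is the main work, since clause (4) must hold relative to both $p_\xi$ and $p_\eta$. For $b\in D_{p_\xi}$, $\Phi_{b,r}$ must extend $\Phi_{b,p_\xi}$. On the new points $(n,m)\in F_{p_\eta}\setminus E$ we need agreement among all $b,c\in D_{p_\xi}$ whose stems do not ``cover'' $(n,m)$. We do this by choosing one canonical value $v(n,m)$ and letting every $b\in D_{p_\xi}\setminus R$ take $\Phi_{b,r}(n,m)=v(n,m)$. For $b\in R$ the value $\Phi_{b,p_\eta}(n,m)$ is already forced. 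Using the uniformization, take $v(n,m)=\Phi_{c,p_\eta}(n,m)$ for any $c\in R$; we then need these values to agree across $c\in R$ whenever $(n,m)$ is uncovered. That agreement is exactly the requirement that $p_\eta$ itself satisfies clause (4) relative to $p_\eta\restriction R$. It holds if we strengthen the uniformization so that, for each $(n,m)\in F_{p_\eta}\setminus E$, the pattern of which $b\in R$ have $\Phi_{b,p_\eta}$ values agreeing there is constant. If $R$ is empty, or no coordinate is uncovered, there is nothing to check. Points with $n<N$ are where the stems matter, and the uniform pattern handles them. For $n\ge N$ all stems in $R$ are undefined, so every $b\in R$ is uncovered. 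In that case we must ensure the $\Phi_{b,p_\eta}(n,m)$, $b\in R$, are equal. We cannot just assume this; this is where the argument needs care.

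\textbf{Main obstacle.} The difficulty is forcing equality of $\Phi_{b,p_\eta}$ on the non-root part across $b\in R$. The plan is to first strengthen each $p_\xi$ to an extension whose stems $s_{b,p_\xi}$, for $b\in R$, are so long that they cover $F_{p_\xi}$ in a controlled way. We lengthen a stem by choosing values above $\dot f_{b}$, which requires deciding them in $\mathbb{P}_b$; since $b<a$, the induction hypothesis lets us pass to extensions below $p\restriction D/b$. With the stems lengthened, the constraints in clause (4) become vacuous except at finitely many uniformized coordinates. The remaining task is to check clauses (3c) and (4) for $r\le p_\xi,p_\eta$. Clause (3c) holds because the stems are unchanged on $R$ and $\max\ge$ each name; it is verified in $\mathbb{P}_b$ via $r\restriction D/b$, using the induction hypothesis that $r\restriction D/b$ is a condition.
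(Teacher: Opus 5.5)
\textbf{There is a genuine gap.} It sits exactly at the point you label the main obstacle: you never resolve it, and the resolution you sketch would not work.

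\textbf{The missing idea.} The sets $F_{p_\xi}$ range over the countable set $[\omega\times\omega]^{<\omega}$. So by pigeonhole you may assume $F_{p_\xi}=F$ is literally constant. In fact your own $\Delta$-system step already gives this, though you did not notice it: the sets $F_{p_\xi}\setminus E$ are pairwise disjoint subsets of the countable set $\omega\times\omega$, so all but countably many of them are empty.

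Once $F$ is constant, and $s_{b,p_\xi}$ and $\Phi_{b,p_\xi}$ are constant for $b\in R$, your amalgamation $r$ has $F_r=F_{p_\xi}=F_{p_\eta}$. Then clause (4) is vacuous for both $r\le p_\xi$ and $r\le p_\eta$, and the $\Phi_{b,r}$ are simply copied. Only (3c) remains, and your max-names together with the unchanged stems handle it. This is the paper's entire proof. No induction on $a$ and no ccc of the smaller $\mathbb{P}_b$ is needed.

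\textbf{Why your route fails.} The argument you sketch for genuinely new points cannot succeed.
\begin{itemize}
\item Nothing in the definition of a condition requires the $\Phi_{b,p_\eta}$, $b\in R$, to agree anywhere. Clause (4) constrains pairs $p\le q$ along $F_p\setminus F_q$; it is not an internal property of $p_\eta$. So uniformizing ``agreement patterns'' cannot create agreement that is absent.
\item Suppose $\Phi_{b,p_\eta}(n,m)\ne\Phi_{c,p_\eta}(n,m)$ at some $(n,m)\in F_{p_\eta}\setminus F_{p_\xi}$ with $b,c\in R$ and $n$ beyond the common stems. Then for a common extension $r$ to satisfy clause (4) relative to $p_\xi$, one of $b,c$, say $b$, must have $n\in\dom(s_{b,r})$ and $s_{b,r}(n)<m$.
\item But $r\le p_\xi$ forces $s_{b,r}(n)\ge\dot f_{b,p_\xi}(n)$, and you have no way to keep this below $m$. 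If $m=0$ it is outright impossible, and it is also impossible if $\dot f_{b,p_\xi}$ and $\dot f_{c,p_\xi}$ name functions that are at least $m$ at $n$.
\end{itemize}
So lengthening stems with values above $\dot f_b$ works against you, not for you, and such $p_\xi,p_\eta$ can simply be incompatible. The obstacle is avoided only by making $F$ constant.
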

\begin{proof}
    Let $\langle p_\alpha\mid \alpha<\omega_1\rangle$ be conditions. 
    By thinning the sequence out if necessary, we may assume 
    \begin{itemize}
        \item $\langle D_{p_\alpha}\mid\alpha<\omega_1\rangle$ forms a $\Delta$-system with root $R$;
        \item $F_{p_\alpha}$ is constant;
        \item for $b\in R$, the values $s_{b,p_\alpha},\Phi_{b,p_\alpha},$ and $\Phi_{b,p_\alpha}$ are constant.
    \end{itemize}
    Then any two remaining conditions are compatible.
\end{proof}

For $b\in D$, we write $x_b=\bigcup_{p\in G}s_{b,p}$ and note that $x_b$ is forced to be a function from $\omega$ to $\omega$. 
\begin{lem}
    For $b,c\in D$, $x_b<^*x_c$ if and only if $b<c$. 
\end{lem}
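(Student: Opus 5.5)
We prove the two directions separately. For the forward direction ($b<c$ implies $x_b<^*x_c$), the idea is that $x_c$ eventually dominates the name $\dot f_{c,p}$ we place on coordinate $c$, and we arrange for $\dot f_{c,p}$ to be a $\mathbb{P}_c$-name dominating $x_b$. Two preliminary facts are needed.

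First, $\mathbb{P}_c$ embeds as a complete suborder of $\mathbb{P}_+$ via the natural inclusion, with $p\mapsto p\upharpoonright D/c$ serving as the reduction. Hence $x_b$ for $b<c$ has a canonical $\mathbb{P}_c$-name $\dot x_b$.

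Second, density: given any $q$ and any $\mathbb{P}_c$-name $\dot g$, there is $p\le q$ with $c\in D_p$ and $\dot f_{c,p}\ge\dot g$ forced. If $c\in D_q$, we keep $s_{c,q}$ and $\Phi_{c,q}$ and replace $\dot f_{c,q}$ by a name for $\max(\dot f_{c,q},\dot g)$. Otherwise we add $c$ with $s=\emptyset$, $\Phi_{c,p}$ copied from some existing $\Phi_{b,q}$ (or set to $0$), and $\dot f=\dot g$. Requirement (4) is vacuous here because $F_p=F_q$.

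With these in hand, take $\dot g=\dot x_b$. By genericity we get $p\in G$ with $\dot f_{c,p}\ge \dot x_b$. Clause (3c) of the ordering then gives, for every $p'\le p$ in $G$, that each new value $s_{c,p'}(n)$ with $n\ge|s_{c,p}|$ satisfies $s_{c,p'}(n)\ge \dot f_{c,p}(n)\ge x_b(n)$, the last evaluated in $G\cap\mathbb{P}_c$. So $x_c\ge^* x_b$.

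To upgrade this to strict eventual domination we use $\dot g=\dot x_b+1$ instead. We must also check that $x_c$ is total, i.e.\ that extending $s_{c,p}$ by one more value is dense. To extend $s_{c,q}$ at $n=|s_{c,q}|$, choose $k$ and a condition $q'\le q\upharpoonright D/c$ in $\mathbb{P}_c$ deciding $\dot f_{c,q}(n)=k$, then put $s_{c,p}=s_{c,q}{}^\frown\langle k\rangle$.

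Amalgamating $q'$ with $q$ is the main technical point. Because $q'$ lives below $c$, we combine the part of $q$ above or incomparable to $c$ with $q'$. Since $q'$ may enlarge $F$, the coordinates $d\in D_q\setminus D/c$ need $\Phi_{d}$ extended to the new pairs of $F$ so that (4) holds. We do this by giving every coordinate the same value on each new pair $(n,m)$, say the value of some designated coordinate. Then (4) holds trivially, and this copying also handles the coordinates in $D/c$ compatibly. Well-foundedness of $D$ justifies the recursion, and this extension lemma is really the core of the argument.

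For the converse, suppose $b\not<c$; we show $x_b\not<^*x_c$, and in fact that $x_b(n)>x_c(n)$ for infinitely many $n$. If $b=c$ this is trivial, so assume $b\ne c$. Fix $q$ and $N$. Extend to put $b,c\in D_q$, and first lengthen $s_{c,q}$ past $N$ using the density argument above. Now extend $s_{b,q}$ at a new $n\ge N$ that already lies in the domain of the lengthened $s_{c}$. The value $x_b(n)$ is constrained only from below, by $\dot f_{b,q}(n)$, which is a $\mathbb{P}_b$-name. So we first decide it by a condition in $\mathbb{P}_b$, as above, and then choose $s_b(n)$ larger than both that decided value and $s_c(n)$.

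What makes this work is that $c\notin D/b$, so $\dot f_{b,q}$ cannot refer to $x_c$, while $s_c(n)$ is already fixed. The issue is if $c<b$ nothing goes wrong either, since $s_c(n)$ is a fixed integer at that point. Genericity then yields infinitely many such $n$.

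I expect the main obstacle to be the amalgamation step: extending a $\mathbb{P}_c$-condition back to a full condition while satisfying requirement (4) across all pairs of coordinates.
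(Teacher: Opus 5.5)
Your proposal is correct and follows essentially the paper's argument. For the forward direction a $\mathbb{P}_c$-name dominating $x_b$ is put into the $\dot f_c$ slot, and for the converse the relevant values of $\dot f_b$ and $\dot f_c$ are decided in $\mathbb{P}_b$ and $\mathbb{P}_c$, amalgamated back, and the $b$-stem is chosen above the $c$-stem. Using $\dot x_b+1$ actually repairs the paper's forward step, which with $x_b\vee\dot f_{c,p}$ only gives $x_b\le^* x_c$. Fixing $s_c(n)$ before choosing $s_b(n)$ also lets you handle $c<b$ together with the incomparable case.

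One point needs tightening in the amalgamation. On a new pair $(n,m)\in F_{q'}\setminus F_q$, the coordinates of $D_q\setminus D/c$ must not copy the value of an arbitrary designated coordinate, so (4) is not literally trivial there. They must copy $\Phi_{e,q'}(n,m)$ for some $e\in D_q\cap D/c$ with $n\notin\dom(s_{e,q'})$ or $m\le s_{e,q'}(n)$. All such $e$ agree by clause (4) for $q'\le q\upharpoonright D/c$, and if there is no such $e$, any value works.
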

\begin{proof}
    For the ``if'' direction, observe that there is a $\mathbb{P}_c$-name for $x_b$; in particular, for any condition $p$ we may extend $p$ to a condition $q$ by setting $\dot f_{c,q}$ to be a $\mathbb{P}_c$-name for $x_b\vee\dot f_{c,p}$; then $q$ forces $x_b<^*x_c$. 
    The converse is immediate if $c<b$ so assume that $b$ and $c$ are incomparable. 
    Fix a condition $p$ and $n<\omega$ sufficiently large that $n>|s_{b,p}|$ for all $b\in D_p$; we show there is a $q\leq p$ such that $q$ forces $x_{b}(n)>x_c(n)$. An easy density argument will then yield that $x_b(n)\not<^*x_c(n)$. 

    To this end, find $q\in \mathbb{P}_b$ with $q\leq p\upharpoonright (D/b)$ such that $q$ decides $\dot f_{b,p}\upharpoonright (n+1)$. 
    Let $p_1$ be a condition refining $p$ with $D_{p_1}=D_p\cup D_q$, $p_1(d)=q(d)$ if $d<b$ and $s_{d,p_1}=s_{d_p}$, $\dot f_{d,p_1}=\dot f_{d,p}$ otherwise; note that some extension of various $\Phi_{a,p}$ may be required.
    Now find $r\leq p_1\upharpoonright (D/c)$ such that $r$ decides $\dot f_{c,p}\upharpoonright (n+1)$ and let $p_2$ be a condition refining $p_1$ such that $D_{p_2}=D_{p_1}\cup D_r$, $p_2(d)=r(d)$ if $d<c$ and $s_{d,p_2}=s_{d,p_1}$, $\dot f_{d,p_2}=\dot f_{d,p_1}$ otherwise. 
    Now it is easy to extend $p_2$ to a condition which forces $x_{b}(n)>x_c(n)$.
\end{proof}
\begin{lem}
    $\{x_b\mid b\in D\}$ is cofinal in $(\omega^\omega,<^*)^{V_{\mathbb{P}_+}}$.
\end{lem}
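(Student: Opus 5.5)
Given a $\mathbb{P}_+$-name $\dot g$ for an element of $\omega^\omega$, we will find $b\in D$ with $\Vdash_{\mathbb{P}_+}\dot g\le^* x_b$; since the previous lemma gives $x_b<^*x_{b'}$ for any $b'>b$, this yields cofinality. The argument has two steps: first replace $\dot g$ by a $\mathbb{P}_b$-name for a suitable $b$, then use the side-condition mechanism in the ordering to dominate that name by $x_b$.

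\emph{Step 1 (reduction to a $\mathbb{P}_b$-name).} By Lemma \ref{ccc}, $\mathbb{P}_+$ is ccc, so we may take $\dot g$ to be a nice name. For each $n$, fix a countable maximal antichain $A_n$ deciding $\dot g(n)$. The supports $D_p$ for $p\in\bigcup_n A_n$ form a countable subset $C\subseteq D$, together with the countably many $b$ that enter through the names $\dot f_{b,p}$. That second set is harder to control, because each $\dot f_{b,p}$ is a $\mathbb{P}_b$-name with its own supports. We close off under this operation: starting from $C$, repeatedly add the supports of the countably many antichains that determine the names $\dot f_{d,p}$ for $d$ already in the set. After $\omega$ rounds this gives a countable set $C^*\subseteq D$. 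By hypothesis $C^*$ has a strict upper bound $b\in D$. We then argue that every $p$ with $D_p\subseteq D/b$ is naturally a condition of $\mathbb{P}_b$, and that $\mathbb{P}_b$ embeds into $\mathbb{P}_+$ via the restriction $p\mapsto p\upharpoonright(D/b)$. Hence $\dot g$ can be read as a $\mathbb{P}_b$-name. Establishing this complete-embedding claim is the main obstacle. The delicate point is clause (4): amalgamating $r\in\mathbb{P}_b$ with $p\in\mathbb{P}_+$ when $r\le p\upharpoonright(D/b)$ requires extending the $\Phi_{d,p}$ for $d\notin D/b$ onto the new pairs in $F_r$ in a way compatible with the agreement requirement. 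We handle this as in the proof of the previous lemma: for each new pair $(n,m)$, let each $\Phi_{d,\cdot}$ with $d\notin D/b$ copy the common value that clause (4) forces among the relevant coordinates. If there is no such constraint, assign $0$.

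\emph{Step 2 (domination).} Given $p\in\mathbb{P}_+$, extend $p$ to $q$ with $b\in D_q$ and $\dot f_{b,q}$ a $\mathbb{P}_b$-name for $\dot g\vee\dot f_{b,p}$, where $\dot f_{b,p}$ is taken to be $0$ if $b\notin D_p$. This is the same move used in the ``if'' direction of the previous lemma. By clause (3)(c) of the ordering, every later extension of the coordinate $s_b$ past $\dom(s_{b,q})$ must be at least $\dot f_{b,q}\ge\dot g$, as forced by the restriction to $D/b$. Consequently $q\Vdash x_b(n)\ge\dot g(n)$ for all $n\ge|s_{b,q}|$. By density, $\Vdash\dot g\le^* x_b$.
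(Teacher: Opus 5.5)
Your proposal is correct and follows the paper's proof. You bound the supports of the countably many conditions in antichains deciding $\dot g$ by some $b\in D$, read $\dot g$ as a $\mathbb{P}_b$-name, and dominate it by $x_b$ with the same side-condition move used in the ``if'' direction of the previous lemma; you also spell out the complete-embedding and domination steps that the paper leaves implicit. Your extra closure under the supports of the names $\dot f_{d,p}$ is harmless but unnecessary, since each $\dot f_{d,p}$ is a $\mathbb{P}_d$-name and $D/d\subseteq D/b$ whenever $d<b$.
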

\begin{proof}
    Suppose $\dot y$ is a $\mathbb{P}_+$-name for an element of $\omega^\omega$. 
    Using that $\mathbb{P}_+$ is ccc, we fix for each $n$ a maximal antichain $\langle p_m^n\mid m<\omega\rangle$ deciding the value of $\dot y(n)$. 
    Since $\mathfrak{b}(D)\geq\aleph_1$, we may find $a\in D$ a strict upper bound for $\{D(p^n_m)\mid m,n<\omega\}$. 
    Then $\dot y$ is equivalent to a $\mathbb{P}_a$-name for an element of $\omega^\omega$ so $\Vdash\dot y<^*x_a$.
\end{proof}
\begin{lem}
    $\Phi_{x_b}=(\bigcup_{p\in G}\Phi_{b,p})\upharpoonright I(x_b)$ defines a nontrivial coherent family indexed by $\{x_b\mid b\in D\}$. 
\end{lem}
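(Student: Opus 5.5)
Throughout, $I(x)=\{(n,m)\in\omega\times\omega : m\le x(n)\}$. A family $\langle \Phi_x \mid x\in X\rangle$ with $\Phi_x\colon I(x)\to\Zbb$ is \emph{coherent} if $\Phi_x$ and $\Phi_y$ agree on $I(x)\cap I(y)$ off a finite set. It is \emph{trivial} if some $\Psi\colon\omega\times\omega\to\Zbb$ satisfies $\Phi_x=^*\Psi\upharpoonright I(x)$ for all $x$. The plan has three parts: show each $\Phi_{x_b}$ is a total function on $I(x_b)$, prove coherence from clause (4) of the ordering, and prove nontriviality by a density argument in the style of the previous lemma.

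\emph{Totality.} Given $p$, $b\in D_p$ and $(n,m)$, we extend $p$ by adding $(n,m)$ to $F_p$ and assigning values $\Phi_{c,p}(n,m)$ for all $c\in D_p$. If we give them a common value, clause (4) holds trivially. So $\bigcup_{p\in G}\Phi_{b,p}$ is a function on all of $\omega\times\omega$ for each $b\in D$, and in particular on $I(x_b)$.

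\emph{Coherence.} Fix $b,c\in D$ and $p\in G$ with $b,c\in D_p$. Let $N$ exceed the lengths of $s_{b,p}$ and $s_{c,p}$ and every first coordinate appearing in $F_p$. Take $(n,m)\in I(x_b)\cap I(x_c)$ with $n\ge N$, and choose $q\in G$, $q\le p$, with $(n,m)\in F_q$ and $n\in\dom(s_{b,q})\cap\dom(s_{c,q})$. Then $m\le x_b(n)=s_{b,q}(n)$ and likewise for $c$, and $(n,m)\notin F_p$. Clause (4) applied to $q\le p$ gives $\Phi_{b,q}(n,m)=\Phi_{c,q}(n,m)$. The exceptions are therefore confined to $n<N$, and since $I(x_b)\cap\{n<N\}$ is finite, the two functions agree mod finite on $I(x_b)\cap I(x_c)$. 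A small point to check is that the clause uses $s_{\cdot,p}$ of the extension; this is exactly what we want.

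\emph{Nontriviality.} This is the main obstacle. Suppose $p\Vdash\dot\Psi\colon\omega\times\omega\to\Zbb$ trivializes the family. Use ccc together with the hypothesis that countable subsets of $D$ have strict upper bounds to find $a\in D$ such that $\dot\Psi$ is equivalent to a $\mathbb{P}_a$-name, as in the cofinality lemma. Strengthening $p$ (and enlarging $a$ if needed so that $a\notin D_p$ is not required), we may assume $p\Vdash\Phi_{x_a}=\Psi\upharpoonright I(x_a)$ off $\{n<k\}$.

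Now pick $n\ge k$ larger than all lengths of the $s_{d,p}$ and all first coordinates in $F_p$. Working below $p\upharpoonright D/a$ inside $\mathbb{P}_a$, decide $\dot f_{a,p}(n)$ and decide $\dot\Psi(n,m)$ for a chosen $m$ with $m\le$ the new value of $s_{a}(n)$. We can take $m$ equal to this value, which must be at least $\dot f_{a,p}(n)$. Amalgamate as in the previous lemma to obtain $p_2$, and then extend $s_{a}$ at $n$ to be at least $m$. Add $(n,m)$ to $F$ and set $\Phi_{a}(n,m)=\Psi(n,m)+1$. Clause (4) is respected if the other $d\in D$ receive consistent values. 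Those $d$ with $s_d(n)<m$ are unconstrained. For those with $s_d(n)\ge m$, we either arrange $s_d(n)<m$ by choosing $m$ above all of them (possible since we choose $s_a(n)$ freely above $f_a(n)$), or give them the same value as $a$.

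The delicate point is that $\Psi$, being a $\mathbb{P}_a$-name, cannot depend on the $a$-coordinate of the generic. Hence the decision of $\Psi(n,m)$ is made by the restriction to $D/a$ and persists after the change at $a$. The resulting condition forces $\Phi_{x_a}(n,m)\ne\Psi(n,m)$ with $n\ge k$, a contradiction.
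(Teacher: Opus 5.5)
Your totality and coherence arguments are correct and match the paper's. In fact your clause-(4) argument already gives agreement off $F_p$, as the paper states, so the bound on $n$ there is unnecessary.

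The gap is in the nontriviality step, in the order of choices. For $d\in D_p\cap(D/a)$ to be unconstrained at $(n,m)$ in clause (4), you need $n\in\dom(s_d)$ and $s_d(n)<m$. You fix $m$ first and then extend in $\mathbb{P}_a$ to some $r$ deciding $\dot\Psi(n,m)$. But $r$ may have set $s_{d,r}(n)\ge m$. Or $r$ may leave $n\notin\dom(s_{d,r})$, in which case $d$ is still constrained, and lengthening $s_d$ afterwards must respect $\dot f_{d,p}(n)$, which may exceed $m$. Re-choosing a larger $m$ means deciding $\dot\Psi$ at a new point, which can push $s_d(n)$ up again. So ``choose $m$ above all of them'' is circular as written.

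Your fallback of giving such $d$ the same value as $a$ fails for $d<a$ once $(n,m)\in F_r$. Then $\Phi_{d,r}(n,m)$ is already fixed inside $\mathbb{P}_a$, and clause (4) for $q\le p$ forces $\Phi_{a,q}(n,m)=\Phi_{d,r}(n,m)$. This really happens. Let $\dot\Psi$ name $\bigcup_{p\in G}\Phi_{d,p}$ for a fixed $d\in D_p\cap(D/a)$; this is a $\mathbb{P}_a$-name. Any $r$ deciding $\dot\Psi(n,m)$ must have $(n,m)\in F_r$ with $\Phi_{d,r}(n,m)=\Psi(n,m)$. Moreover, off $F_p$ the function $\Phi_{x_a}$ can disagree with this $\Psi$ only at points $(n,m)$ with $m>x_d(n)$. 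So placing $m$ above $s_d(n)$ before deciding $\dot\Psi$ is the heart of the argument, not an optional adjustment.

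The repair is the paper's ordering.
\begin{enumerate}
\item Extend $p\upharpoonright(D/a)$ in $\mathbb{P}_a$ to $r_0$ that decides $\dot f_{a,p}\upharpoonright(n+1)$ and has $n\in\dom(s_{d,r_0})$ for every $d\in D_p\cap(D/a)$. You need all of $\dot f_{a,p}\upharpoonright(n+1)$, not just $\dot f_{a,p}(n)$, to lengthen $s_a$ to $n+1$.
\item Only then set $m=1+\max_d s_{d,r_0}(n)$.
\item Extend $r_0$ to $r$ deciding $\dot\Psi(n,m)$ with $(n,m)\in F_r$.
\end{enumerate}
Since $s_{d,r}(n)=s_{d,r_0}(n)<m$, no $d\in D_p\cap(D/a)$ is constrained at $(n,m)$. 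The constrained members of $D_p$ are $a$ itself (with $s_a(n)\ge m$) and the $d\in D_p$ with $d\not\le a$. For the latter, $n\notin\dom(s_{d,p})$ and the value at $(n,m)$ is still free. Members of $D_r\setminus D_p$ play no role, because clause (4) for $q\le p$ quantifies only over $D_p$. All constrained coordinates can then receive the value $\Psi(n,m)+1$.
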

\begin{proof}
    Coherence follows from an easy density argument since for any condition $p$ and any $b,c\in D_p$, $p\Vdash\Phi_{x_b}=^{F_p}\Phi_{x_c}$ from the definition of the ordering. 
    For nontriviality, suppose that $\Vdash\dot\Psi\colon\omega\times\omega\to\mathbb{Z}$. 
    For each $m,n$, let $\mathcal{A}_{m,n}$ be a maximal antichain of conditions that decide the value of $\dot\Psi(m,n)$. 
    Since $\mathcal{A}_{m,n}$ is countable by Lemma \ref{ccc} and $\mathfrak{b}(D)\geq\aleph_1$, we may fix $a\in D$ a strict upper bound for $\bigcup_{m,n}\mathcal{A}_{m,n}$. 
    We show that $\Phi_a$ is forced to disagree infinitely often with $\Psi$; in particular, $\Psi$ does not trivialize $\Phi$.
    
    Fix a condition $p$ and $n<\omega$. 
    We find $q\leq p$ and $m,k<\omega$ such that $m\geq n$, $s_{q,a}(m)\geq k$, and $q\Vdash\Phi_a(m,k)\neq \Psi(m,k)$. 
    By increasing $n$ if necessary, we may assume that 
    \begin{itemize}
        \item $n>|s_{p,a}|$;
        \item there is no $\ell$ with $(n,\ell)\in F_p$.
    \end{itemize}
    Fix $r\leq p\upharpoonright (D/a)$ deciding $\dot f_{p,a}\upharpoonright (n+1)$ to be some $t$. 
    By extending $r$ if necessary, we may assume that if $b\in D_p\cap (D/a)$ then $|s_{b,q}|>n$. 
    Let 
    \[k=1+\max_{b\in D_p\cap (D/a)}s_{b,r}(n).\]
    By extending $r$ if necessary, we may also assume that $r$ decides $\Psi(n,k)$ and that $(n,k)\in F_r$. 
    We now define a condition $q\leq p$ with $D_q=D_p\cup D_r$, $F_q=F_r$, and
    \begin{itemize}
        \item If $b\in D_r$ then $q(b)=r(b)$;
        \item If $b\in D_q$ and $b\not\leq a$ then $s_{b,p}=s_{b,q}$ and $\dot f_{b,q}=\dot f_{b,q}$;
        \item $s_{a,q}$ extends $s_{a,p}$, $|s_{a,q}|= n+1$, $s_{a,q}(n)\geq k$, and for each $\ell\in\operatorname{dom}(s_{a,q})\setminus\operatorname{dom}(s_{a,p})$, $s_{a,q}(\ell)\geq t(\ell)$;
        \item $\dot f_{a,q}=\dot f_{a,p}$;
        \item $\Phi_{a,q}(n,k)$ disagrees with the value $r$ decides for $\Psi(n,k)$. 
    \end{itemize}
    Then $q$ forces $\Phi_{a,q}(n,k)\neq\Psi(n,k)$, as desired. 
\end{proof}
\section{Simultaneous nonvanishing} \label{snv_section}
The main results of this section are the following two theorems:
\begin{thm} \label{snv_omega_n_thm}
    Relative to the consistency of ZFC, it is consistent that $\mathfrak{b}=\mathfrak{d}=\omega_{n}$ and $\bigwedge_{1\leq k\leq n}\lim^k\mathbf{A}\neq0$. 
\end{thm}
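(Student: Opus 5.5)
Our plan is to combine the nonlinear iteration of Theorem~\ref{nonlinear-thm}, applied with $D=\omega_n$ (a well-order in which every countable subset has a strict upper bound), with the argument of Casarosa and Lambie-Hanson~\cite{SNVHDL}. They show that if $\mathfrak{d}=\omega_n$ and the relevant weak diamond and square-type hypotheses hold, then $\lim^k\mathbf{A}\ne0$ for $2\le k\le n$. We start from a ground model $V$ satisfying GCH together with the combinatorial hypotheses they use at $\omega_1,\dots,\omega_n$; for instance $V=L$, or a model obtained from $L$ by a preliminary forcing in which $\square_{\omega_k}$ and $\diamondsuit(S^{\omega_{k+1}}_{\omega_k})$-type principles hold for $k<n$. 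In $V$ we force with $\mathbb{P}_+$ for $D=\omega_n$.

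First, by Lemma~\ref{ccc} and the cofinality lemmas proved above, $\mathbb{P}_+$ is ccc and adds a $<^*$-increasing, cofinal sequence $\langle x_\alpha\mid\alpha<\omega_n\rangle$ in $\omega^\omega$. Since $\omega_n$ is regular and the sequence is a cofinal well-ordered scale, we get $\mathfrak{b}=\mathfrak{d}=\omega_n$. The last lemma of Section~\ref{nonlin-section} gives a nontrivial coherent family indexed by the scale. Because the $x_\alpha$ are cofinal, this family extends to a nontrivial coherent family on all of $\omega^\omega$, which yields $\lim^1\mathbf{A}\ne0$. Since $\mathbb{P}_+$ is ccc, it preserves cardinals and cofinalities, so $\omega_n$ is computed the same way in $V$ and in the extension.

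Second, we verify that the hypotheses of~\cite{SNVHDL} survive into the extension, for the scale added by the forcing. Their proof of $\lim^k\mathbf{A}\ne0$ uses a $\mathfrak{d}$-scale of length $\omega_n$ together with ground-model square sequences and weak diamond guessing on the $\omega_k$, for $k\le n$, and builds a nontrivial $k$-coherent family by recursion along the scale. The square sequences are absolute: ccc forcing preserves clubs' being club and preserves cofinalities. The weak diamond principles are the delicate part. We would either choose a formulation that is preserved by ccc forcing of size $\le\omega_n$ (for example, restrict guessing to stationary sets of cofinality $\geq\omega_1$ and absorb the names via the ccc, since every $\mathbb{P}_+$-name for a subset of $\omega_k$ is coded by an $\aleph_1$-sized object), or reorganize the construction so that the weak diamond guessing happens inside the iteration itself. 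In the latter approach, at stage $\alpha$ of $\omega_n$ the $\mathbb{P}_\alpha$-names are guessed by ground-model diamond sequences applied to codes of nice names.

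The main obstacle is exactly this compatibility check: verifying that the specific $\diamondsuit$-type hypothesis in~\cite{SNVHDL} transfers through the nonlinear iteration. What we would try first is to use the fact that $2^{\omega_{k}}=\omega_{k+1}$ continues to hold in the extension for $k\ge 1$, and then use Shelah's theorem that $2^{\lambda}=\lambda^+$ implies $\diamondsuit(S)$ for stationary $S\subseteq\lambda^+$ of points of cofinality $\ne\operatorname{cf}\lambda$. The GCH-type computation holds because $|\mathbb{P}_+|=\omega_n$ and $\mathbb{P}_+$ is ccc. This reproves the weak diamonds in the extension outright, and the case $k=1$ is handled by the first step. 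Any hypothesis at $\omega_1$, where CH fails in the extension, would have to be avoided. If their $k\geq2$ argument truly needs one there, we would fall back on the second approach of guessing within the iteration.
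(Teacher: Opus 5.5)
Your first step is fine and agrees with the paper: $\mathbb{P}_+$ for $D=\omega_n$ gives a $<^*$-increasing cofinal scale of length $\omega_n$ and a nontrivial coherent family indexed by it. So $\mathfrak{b}=\mathfrak{d}=\omega_n$ and $\lim^1\mathbf{A}\neq 0$. The gap is in the second step, and bookkeeping cannot fix it.

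Your cardinal arithmetic is wrong. Over a GCH ground model, a ccc forcing of size $\aleph_n$ that adds $\aleph_n$ reals gives $2^{\aleph_k}=\aleph_n^{\aleph_k}=\aleph_n$ for every $k<n$. Hence $2^{\aleph_0}=2^{\aleph_1}=\cdots=2^{\aleph_{n-1}}=\aleph_n$, and $2^{\aleph_k}=\aleph_{k+1}$ fails for all $1\le k\le n-2$, not only at $\omega_1$.

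More importantly, a weak diamond on any stationary subset of $\omega_{k+1}$ implies $2^{\aleph_k}<2^{\aleph_{k+1}}$. To see this, suppose $2^{\aleph_k}=2^{\aleph_{k+1}}$. Code each candidate guess $g\in 2^{\omega_{k+1}}$ into $f\upharpoonright\omega_k$, and let $F(f\upharpoonright\alpha)=1-g(\alpha)$ for $\alpha>\omega_k$. So in your model every $w\diamondsuit$ at $\omega_1,\dots,\omega_{n-1}$ is outright false. Neither a ccc-preservation argument nor guessing names inside the iteration can produce a principle that fails in ZFC there.

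The $w\diamondsuit$ hypotheses used in \cite{SNVHDL}, following \cite{NVHDL}, are guessing principles on the cardinals $\omega_{k+1}$ for $k<n$. They therefore require strict jumps $2^{\aleph_k}<2^{\aleph_{k+1}}$. Together with $2^{\aleph_0}\ge\mathfrak{d}=\aleph_n$, this forces $2^{\aleph_k}\ge\aleph_{n+k}$, which no ccc forcing of size $\aleph_n$ over GCH can achieve. Appealing to Shelah's theorem that $2^\lambda=\lambda^+$ implies $\diamondsuit(S)$ points the wrong way: what is needed is a jump in the continuum function, not GCH.

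The missing idea is an additional closed factor that spreads out the continuum function. The paper works in $L^{\mathbb{P}_{\omega_n}\times\mathbb{C}_1\times\cdots\times\mathbb{C}_n}$ with $\mathbb{C}_k=\operatorname{Fn}(\omega_{n+k},2,\omega_k)$, so that $2^{\aleph_k}=\aleph_{n+k}$ for $0\le k\le n$.

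The product $\mathbb{C}_1\times\cdots\times\mathbb{C}_n$ is countably closed and $\mathbb{P}_{\omega_n}$ is ccc. By Easton's lemma, the product adds no reals beyond those added by $\mathbb{P}_{\omega_n}$. Thus $\mathfrak{b}=\mathfrak{d}=\omega_n$ and the nontriviality of the $1$-coherent family, which quantifies only over reals, both persist. Your first step therefore goes through unchanged.

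The needed combination of $w\diamondsuit$ and $\square$ sequences is then verified as in \cite[Lemma 5.6]{SNVHDL}. Finally, \cite[Lemma 5.7]{SNVHDL} gives $\lim^k\mathbf{A}\neq0$ for $2\le k\le n$.
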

\begin{thm} \label{snv_thm}
    Relative to the consistency of ZFC, it is consistent that $\mathfrak{b}=\mathfrak{d}=\omega_{\omega+2}$ and $\bigwedge_{1\leq k<\omega}\lim^k\mathbf{A}\neq0$. 
\end{thm}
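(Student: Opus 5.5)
The plan is to run the Casarosa--Lambie-Hanson constructions of \cite{SNVHDL} inside the nonlinear iteration of Theorem \ref{nonlinear-thm}, applied to a suitable directed order. Their model for $\bigwedge_{2\leq k\leq n}\lim^k\Abf\neq0$ is a finite support iteration of Hechler forcing of length $\omega_n$ over a ground model satisfying certain weak diamond and square-type principles, which yields $\mathfrak{b}=\mathfrak{d}=\omega_n$. Call this model $V^{\mathbb H}$. The nonvanishing of $\lim^k\Abf$ for $k\geq2$ is witnessed there by nontrivial $k$-coherent families indexed by the Hechler reals, which form a cofinal $\omega_n$-scale. Their argument uses the Hechler iteration only through the following features:
\begin{itemize}
\item it is ccc;
\item the generic reals form a $<^*$-increasing cofinal sequence;
\item every real appears at a bounded countable stage, so that names can be reflected to initial segments;
\item a diagonalization step at stages predicted by the guessing principle.
\end{itemize}

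Concretely, I would take $D=\omega_n$ with its ordinal order. Countable subsets have strict upper bounds, since $\operatorname{cf}(\omega_n)>\omega$. I would force with $\mathbb{P}_+$ from Definition \ref{forcing_def} over the ground model of \cite{SNVHDL}. The earlier lemmas then give:
\begin{itemize}
\item $\mathbb{P}_+$ is ccc (Lemma \ref{ccc});
\item $\{x_\alpha\mid\alpha<\omega_n\}$ is $<^*$-increasing and cofinal, hence $\mathfrak{b}=\mathfrak{d}=\omega_n$;
\item $\Phi_{x_\alpha}$ is a nontrivial coherent family, hence $\lim^1\Abf\neq0$.
\end{itemize}
For the infinite case I would take $D=\omega_{\omega+2}$ and the corresponding ground model of \cite{SNVHDL}.

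The key step is to check that $\mathbb{P}_\alpha$ can replace the $\alpha$-th initial segment of the Hechler iteration in their proof. Two facts are needed:
\begin{itemize}
\item $\mathbb{P}_\beta$ is a complete suborder of $\mathbb{P}_\alpha$ for $\beta<\alpha$;
\item $\mathbb{P}_{\alpha+1}$ factors as $\mathbb{P}_\alpha$ followed by a $\sigma$-centered forcing adding $x_\alpha$, a real dominating $V^{\mathbb{P}_\alpha}$, together with a new column of the coherent family.
\end{itemize}
The factorization requires care, because the compatibility condition (4) of the ordering ties $\Phi_{\alpha}$ to the $\Phi_\beta$. Given the factorization, their inductive construction of $k$-coherent nontrivial families goes through. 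Reflection of names to bounded stages is exactly the argument in the cofinality lemma, and their diagonalization at guessed stages only needs that $x_\alpha$ is Hechler-generic over $V^{\mathbb{P}_\alpha}$ up to the extra coordinates.

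The main obstacle is verifying that the extra $\lim^1$ coordinates do not interfere with the genericity arguments used for $k\geq2$. I would handle this by proving the quotient $\mathbb{P}_{\alpha+1}/\mathbb{P}_\alpha$ is forcing-equivalent to Hechler forcing times a Cohen-like $\sigma$-centered poset. The $s_{\alpha}$ and $\dot f_\alpha$ coordinates give Hechler, and the $\Phi_\alpha$ values outside the forced region are essentially Cohen. I would then check that the relevant lemmas of \cite{SNVHDL} hold for any such ccc, $\sigma$-centered-step iteration adding a dominating scale.
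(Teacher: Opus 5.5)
There is a genuine gap in how you handle $\lim^k\Abf$ for $k\geq 2$. Your use of $\Pbb_+$ with $D=\omega_{\omega+2}$ to get $\mathfrak b=\mathfrak d=\omega_{\omega+2}$ and $\lim^1\Abf\neq0$ matches the paper. The step you single out as key, however, rests on a false claim.

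At a stage $\alpha$ of uncountable cofinality, the quotient $\Pbb_{\alpha+1}/\Pbb_\alpha$ is not Hechler forcing times a $\sigma$-centered poset. The reason is as follows.
\begin{itemize}
\item By density, $\bigcup_{p\in G}\Phi_{\alpha,p}$ is a total function on $\omega\times\omega$.
\item Every $\beta<\alpha$ lies in $D_p$ for some $p\in G$ with $\alpha\in D_p$.
\item Clause (4) of the ordering, together with $x_\beta<^*x_\alpha$, then makes this function agree with $\Phi_{x_\beta}$ on $I(x_\beta)$ off a finite set.
\end{itemize}
So the $\alpha$-th coordinate trivializes $\langle\Phi_{x_\beta}\mid\beta<\alpha\rangle$. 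This family is nontrivial in $V^{\Pbb_\alpha}$: apply the paper's nontriviality lemma to $D/\alpha$, whose countable subsets have strict upper bounds when $\operatorname{cf}(\alpha)>\omega$.

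No $\sigma$-linked forcing can trivialize a nontrivial coherent family indexed by a $<^*$-chain of cofinality $\omega_1$. Take conditions deciding the finite disagreement set of the trivialization with $\Phi_{x_{\beta_j}}$, and the values there. Put uncountably many of them, with identical decisions, into one linked piece. Pairwise compatibility then lets the corresponding $\Phi_{x_{\beta_j}}$ be glued into a ground-model trivialization. So the new column is not ``essentially Cohen''. The quotient behaves like $\mathbb{T}_{\Phi\upharpoonright\alpha}\times\Hbb$ from the linear iteration, and checking \cite{SNVHDL} against $\sigma$-centered-step iterations would be checking the wrong class.

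The missing idea is that no iteration-internal analysis is needed at all. The input from \cite{SNVHDL} (their Lemma 5.10) derives $\lim^k\Abf\neq0$ for all $k\geq2$ from two things in the final model: $\mathfrak b=\mathfrak d=\omega_{\omega+2}$, and suitable $w\diamondsuit$ and $\square$ sequences. It does not refer to how the scale was forced. So you only need a model where $\Pbb_{\omega_{\omega+2}}$ supplies the scale and the nontrivial $1$-coherent family, while those combinatorial hypotheses also hold.

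The paper forces over $L$ with
\[
\Pbb_{\omega_{\omega+2}}\times\prod_{1\leq n<\omega}\operatorname{Fn}(\omega_{\omega+n+2},2,\omega_n)\times\operatorname{Fn}(\omega_{\omega\cdot2+2},2,\omega_{\omega+2}).
\]
\begin{itemize}
\item The $\square$ sequences come from $L$.
\item The weak diamonds come from the cardinal arithmetic the closed factors produce \emph{after} $2^{\aleph_0}$ has become $\aleph_{\omega+2}$.
\item Easton's lemma shows the closed factors add no reals, so $\mathfrak b=\mathfrak d=\omega_{\omega+2}$ and the nontriviality of the coherent family persist.
\end{itemize}
Your proposal never says how the guessing principles hold in the final model. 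Ground-model weak diamonds need not survive raising the continuum; for instance, $2^{\aleph_0}<2^{\aleph_1}$ fails after adding $\aleph_{\omega+2}$ reals over a model of GCH.
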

We first prove Theorem \ref{snv_omega_n_thm}:
\begin{proof}
    Our model will be the following forcing extension of $L$:
    \[V'=L^{\mathbb{P}_{\omega_n}\times \mathbb{C}_1\times\ldots\times\mathbb{C}_n},\]
    where $\mathbb{P}_{\omega_n}$ is the forcing of Definition \ref{forcing_def} with $D$ the linear order $\omega_n$ and $\mathbb{C}_k=\operatorname{Fn}(\omega_{n+k},2,\omega_k)$ for all $1\leq k\leq n$; i.e., $\mathbb{C}_k$ is the forcing to add $\omega_{n+k}$ many subsets of $\omega_k$. 
    By Easton's lemma, every real added by this product is added by $\mathbb{P}_{\omega_n}$ so in $V'$, $\mathfrak{b}=\mathfrak{d}=\omega_n$ and the $1$-coherent family added by $\mathbb{P}_{\omega_n}$ is nontrivial. 
    Nonvanishing $\lim^k\mathbf{A}$ for $2\leq k\leq n$ follows from \cite[Lemma 5.7]{SNVHDL} as the verification of the appropriate combination of $w\diamondsuit$ and $\square$ sequences is identical to the proof of \cite[Lemma 5.6]{SNVHDL}.
\end{proof}
Theorem \ref{snv_thm} follows similarly:
\begin{proof}
    Our model will be the following forcing extension of $L$:
    \[V'=\mathbb{P}_{\omega_\omega}\times\prod_{1\leq n<\omega}\mathbb{C}_n\times\mathbb{C}_\omega,\]
    where $\mathbb{P}_{\omega_n}$ is the forcing of Definition \ref{forcing_def} with $D$ the linear order $\omega_{\omega+2}$ and $\mathbb{C}_k=\operatorname{Fn}(\omega_{\omega+n+2},2,\omega_n)$ for all $1\leq n<\omega$, and $\mathbb{C}_\omega=\operatorname{Fn}(\omega_{\omega\cdot 2+2},2, \omega_{\omega+2})$. 
    Easton's lemma yields that $\mathfrak{b}=\mathfrak{d}=\omega_{\omega+2}$ and $\lim^1\mathbf{A}\neq0$ and the nonvanishing of $\lim^k\mathbf{A}$ for $k\geq 2$ follows from \cite[Lemma 5.10]{SNVHDL}.
\end{proof}

\section{Trivializing and freezing 1-coherent families} \label{trivializing-freezing-section}

We now recall a transformation of Todorcevic (see \cite[\S4]{DSV}) which converts nontrivial coherent families
indexed by subsets of $\omega^\omega$ into gaps in $[\omega]^\omega/\Fin$.
Suppose that $X \subseteq \omega^\omega$ is finitely directed under $<^*$ 
and $\Phi = \langle \phi_x \mid x \in X \rangle$ is a 1-coherent family
such that $\phi_x \colon I_x \to \Zbb$.
Define $A_x \subseteq I_x \times \Zbb$ to be the graph of $\phi_x$:
$$A_x := \{(i,j,k) \in \omega \times \omega \times \Zbb \mid (i,j) \in I_x \textrm{ and } k = \phi_x(i,j)\}.$$
Define $B_x:= I_x \times \Zbb \setminus A_x$.
It is easily checked that for all $x,y \in X$, $A_x \cap B_y$ is finite and 
if $x <^* y$ are in $X$, then $A_x \subseteq^* A_y$ and $B_x \subseteq^* B_y$.
Furthermore $\Phi$ is trivial if and only if there is a $C \subseteq \omega \times \omega \times \Zbb$ such
that for all $x \in X$, $A_x \subseteq^* C$ and $B_x \cap C$ is finite.

Now suppose $X\subseteq\omega^\omega$ and
$\Phi$ is a $1$-coherent family indexed by $X$. 

\begin{defn}
    $\mathbb{T}_\Phi$ consists of conditions $p=(\psi_p,F_p)$ where
    \begin{itemize}
        \item $\psi_p$ is a finite partial function from $\omega\times\omega$ to $\mathbb{Z}$;
        \item $F_p$ is a finite subset of $X$ such that for every $x,y\in F_p$, \[\{(n,m)\mid \Phi_x(n,m)\neq\Phi_y(n,m)\}\subseteq\dom(\psi_p).\]
    \end{itemize}
    We define $q\leq p$ to mean
    \begin{itemize}
        \item $\psi_q\sqsupseteq\psi_p$;
        \item $F_q\supseteq F_p$;
    \item for all $x\in F_p$ and all $(n,m)\in I_x\cap \dom(\psi_q) \setminus \dom(\psi_p)$, $\psi_q(n,m)=\Phi_x(n,m)$. 
    \end{itemize}
\end{defn}
This poset is the coherent family analog of the Kunen's gap-splitting
poet $P(\bar a,\bar b)$ defined in \cite[\S4]{Baumgartner:PFA}.
Kunen's characterization of destructible gaps (see \cite[4.2]{Baumgartner:PFA}) yields the following lemmas after minor modifications to the arguments in \cite{Baumgartner:PFA}:

\begin{lem} \label{destructible_equiv_lemma}
    Suppose $X$ is finitely directed under $<^*$. The following are equivalent:
    \begin{enumerate}[label=(\arabic*)]
        \item $\mathbb{T}_\Phi$ is ccc; \label{ccc_item}
        \item $\Phi$ can be trivialized by a forcing which preserves $\aleph_1$; \label{destructible}
        \item For all uncountable $Y\subseteq X$ there are $x,y\in Y$ such that $\Phi_x=\Phi_y$ on the intersection of their domains. \label{eq}
    \end{enumerate}
\end{lem}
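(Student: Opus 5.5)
The plan is to prove the cycle \ref{ccc_item}$\Rightarrow$\ref{destructible}$\Rightarrow$\ref{eq}$\Rightarrow$\ref{ccc_item}, following Kunen's gap-splitting arguments in \cite[\S4]{Baumgartner:PFA}.

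For \ref{ccc_item}$\Rightarrow$\ref{destructible}, force with $\mathbb{T}_\Phi$ and set $\Psi=\bigcup_{p\in G}\psi_p$. Given $x\in X$ and a condition $p$, directedness of $X$ lets us add $x$ to $F_p$: we only need to enlarge $\dom(\psi_p)$ by the finitely many points where $\Phi_x$ disagrees with some $\Phi_y$, $y\in F_p$, and we may choose the new values arbitrarily. So by density every $x$ eventually enters some $F_p$ with $p\in G$. After that point, every new value of $\Psi$ on $I_x$ equals $\Phi_x$. Density arguments also make $\Psi$ total. Hence $\Psi\upharpoonright I_x=^*\Phi_x$ for all $x$, so $\Phi$ is trivialized, and a ccc forcing preserves $\aleph_1$.

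For \ref{destructible}$\Rightarrow$\ref{eq}, argue by contraposition. Suppose $Y\subseteq X$ is uncountable and $\Phi_x\ne\Phi_y$ somewhere on $I_x\cap I_y$ for all distinct $x,y\in Y$. Shrink $Y$ to size $\aleph_1$, say $\{x_\alpha\mid\alpha<\omega_1\}$. Let $\mathbb{Q}$ preserve $\aleph_1$ and add $\Psi$ trivializing $\Phi$. In the extension, pick uncountably many $\alpha$ with the same finite error set $E$ and the same $\Phi_{x_\alpha}\upharpoonright E$ (for instance, $\Psi$ agrees with $\Phi_{x_\alpha}$ off $E$). The error sets $\{(n,m)\in I_{x_\alpha}\mid \Psi(n,m)\neq\Phi_{x_\alpha}(n,m)\}$ are finite sets in $V[H]$, and $\aleph_1$ is preserved, so a counting argument gives uncountably many $\alpha$ sharing the same error set together with the same values of $\Phi_{x_\alpha}$ on it. For two such $\alpha,\beta$, the function $\Phi_{x_\alpha}$ agrees with $\Psi$ off $E$ and with $\Phi_{x_\beta}$ on $E$ on the common domain. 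Hence $\Phi_{x_\alpha}=\Phi_{x_\beta}$ on $I_{x_\alpha}\cap I_{x_\beta}$, contradicting the choice of $Y$. This step is routine.

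For \ref{eq}$\Rightarrow$\ref{ccc_item}, which I expect to be the main obstacle, take an uncountable family $p_\alpha$. First apply a $\Delta$-system argument and pigeonhole to assume the following:
\begin{enumerate}[label=(\roman*)]
\item $\psi_{p_\alpha}=\psi$ is constant;
\item $|F_{p_\alpha}|=k$ for all $\alpha$;
\item the $F_{p_\alpha}$ form a $\Delta$-system whose enumerations $x^\alpha_0,\dots,x^\alpha_{k-1}$ have fixed positions for the root;
\item there is a fixed $N$ such that all the finitely many disagreements among $\Phi_{x^\alpha_i}$, $i<k$, lie below $N$ and inside $\dom\psi$, and the restrictions $\Phi_{x^\alpha_i}\upharpoonright N\times N$ are fixed.
\end{enumerate}
Compatibility of $p_\alpha$ and $p_\beta$ then amounts to every $\Phi_{x^\alpha_i}$ agreeing with every $\Phi_{x^\beta_j}$ on common domains outside $\dom\psi$.

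Condition \ref{eq} only yields agreement for single pairs $x,y$, so the key step is to upgrade it to $k$-tuples. Here I would mimic Kunen: pass to the product coordinatewise, i.e.\ consider the family on the disjoint union $k\times\omega\times\omega$ indexed by the tuples. This is the standard proof that the ccc of Kunen's poset follows from indestructibility via iterating \ref{eq} on the finitely many coordinates. Applying \ref{eq} coordinate by coordinate does not suffice directly, because cross terms $i\neq j$ are also needed. To handle those, I would use directedness. Choose $z_\alpha\in X$ with $z_\alpha\geq^*$ all $x^\alpha_i$, and absorb the finitely many errors into the fixed part, so that each $\Phi_{x^\alpha_i}$ is the restriction of $\Phi_{z_\alpha}$ off a finite set that has been stabilized. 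Then apply \ref{eq} to $\{z_\alpha\}$ to get $\Phi_{z_\alpha}=\Phi_{z_\beta}$ on common domains. This forces all the $\Phi_{x^\alpha_i}$ and $\Phi_{x^\beta_j}$ to agree off the stabilized finite region. The delicate point is checking that the region where $I_x\not\subseteq I_z$ can also be stabilized; if necessary, I would use $I_x\subseteq^* I_z$ and fold those points into $N$.

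Some care is needed if the $z_\alpha$ repeat. In that case, collapsing to one $z$ makes compatibility even easier.
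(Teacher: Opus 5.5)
Your proposal is correct and is essentially the adaptation of Kunen's gap argument from \cite{Baumgartner:PFA} that the paper invokes without writing out; in the step (3)$\Rightarrow$(1), directedness supplies the common upper bound $z_\alpha$ that plays the role of the top index of an $(\omega_1,\omega_1)$-gap. One slip needs fixing in (1)$\Rightarrow$(2): when you add $x$ to $F_p$ (or a new point to $\dom(\psi_p)$), the new values cannot be arbitrary, since an extension of $p$ must set $\psi(n,m)=\Phi_y(n,m)$ for each $y\in F_p$ with $(n,m)\in I_y$; this is well defined because members of $F_p$ agree off $\dom(\psi_p)$, and the finiteness you need there comes from coherence, not from directedness.
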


\begin{lem}
\label{indestructible_lem}
    Suppose $X=\{x_\alpha\mid\alpha<\omega_1\}$ is a subset of $\omega^\omega$ and $\Phi$ is a coherent family indexed by $X$ such that whenever $\mathcal{A}$ is an uncountable family of finite pairwise disjoint subsets of $\omega_1$, setting $\Phi_F=\Phi_{x_{\min F}}\upharpoonright I\left(\bigwedge_{\alpha\in F}x_\alpha\right)$, the family
    \[\Phi^\Acal=\left\{\Phi_F\,\middle|\,F\in\mathcal{A}\right\}\]
    is nontrivial.
    There is a ccc forcing extension in which $\Phi$ cannot be trivialized by any $\aleph_1$-preserving forcing. 
    In particular, $\Phi$ cannot be trivialized by any Knaster forcing.
\end{lem}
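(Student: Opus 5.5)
Following Kunen's freezing argument for gaps, the plan is to force with a ccc poset that adds an uncountable $Y\subseteq X$ on which clause \ref{eq} of Lemma \ref{destructible_equiv_lemma} fails \emph{in a strong, absolute way}: any two distinct $x_\alpha,x_\beta\in Y$ should carry a concrete finite witness $(n,m)\in I(x_\alpha)\cap I(x_\beta)$ with $\Phi_{x_\alpha}(n,m)\neq\Phi_{x_\beta}(n,m)$. Such a witness is a fact about ground-model objects, so it persists to every outer model. In any $\aleph_1$-preserving further extension, $Y$ is still uncountable. Any trivialization $\Psi$ there would, by pigeonhole on the countably many finite error patterns, agree with $\Phi_{x_\alpha}$ off a fixed finite set $E$ with fixed values on $E$ for uncountably many $\alpha\in Y$. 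Two such $\alpha,\beta$ would then satisfy $\Phi_{x_\alpha}=\Phi_{x_\beta}$ on the common domain, contradicting the witness. Knaster forcings preserve $\aleph_1$, which gives the ``in particular'' clause.

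Concretely, I will let $\mathbb{Q}$ consist of finite sets $p\subseteq\omega_1$ such that for all distinct $\alpha,\beta\in p$, $\Phi_{x_\alpha}$ and $\Phi_{x_\beta}$ disagree somewhere on $I(x_\alpha)\cap I(x_\beta)$, ordered by reverse inclusion. The generic $Y=\bigcup G$ has the required property. Two steps remain: $\mathbb{Q}$ is ccc, and $Y$ is uncountable.

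For the ccc, take an uncountable family of conditions and thin it to a $\Delta$-system; removing the root leaves an uncountable pairwise disjoint family $\mathcal{A}$ of finite sets. Compatibility of $p_F,p_{F'}$ then only requires a disagreement between every $\alpha\in F$ and every $\beta\in F'$. The hypothesis says $\Phi^{\mathcal{A}}$ is nontrivial. Applying Lemma \ref{destructible_equiv_lemma} in the contrapositive direction to $\Phi^{\mathcal{A}}$ (indexed by $\{\bigwedge_{\alpha\in F}x_\alpha\}$) gives two $F,F'$ with $\Phi_F\neq\Phi_{F'}$ somewhere on the common domain. But I need disagreement between \emph{all} cross pairs, not just the minima. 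This is the main obstacle. I would first try thinning further, using coherence: for each $F$, all $\Phi_{x_\alpha}$ with $\alpha\in F$ agree with $\Phi_F$ outside a finite set, and I can uniformize that finite set and the values on it across $\mathcal{A}$. After this, a disagreement of $\Phi_F,\Phi_{F'}$ at a point outside the uniform finite set transfers to every cross pair $\alpha\in F$, $\beta\in F'$, since that point lies in the domain $I(\bigwedge F)\cap I(\bigwedge F')$. I would then apply the nontriviality hypothesis to the thinned family, which is still uncountable, and arrange that the disagreement found avoids the finite set. For this I expect to use that the failure of clause \ref{eq} for $\Phi^{\mathcal A}$ can be strengthened to infinitely many disagreements via coherence.

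For uncountability of $Y$, I will argue by density. If some $p$ forced $Y\subseteq\gamma$, then by genericity no $\alpha>\gamma$ could be added to $p$. That means every $x_\alpha$ with $\alpha>\gamma$ agrees with some $x_\beta$, $\beta\in p$, on their common domain. I expect to contradict this by applying the hypothesis to the singleton family $\{\{\alpha\}:\alpha>\gamma\}$ together with a pigeonhole choice of $\beta$. Finally, an iteration or product is unnecessary: one ccc extension by $\mathbb{Q}$ suffices.
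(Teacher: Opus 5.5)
Your strategy is the one the paper intends: it proves nothing here and only cites Kunen's freezing argument in \cite{Baumgartner:PFA}, and your freezing poset of finite sets of pairwise disagreeing indices is its direct analogue. Your pigeonhole argument that an uncountable frozen $Y$ blocks trivialization in every $\aleph_1$-preserving extension is correct. But the step showing that $Y$ is uncountable fails as proposed. It is not true that every condition forces $Y$ to be unbounded, and the hypothesis gives no contradiction. Knowing that $\Phi_{x_\alpha}$ agrees with $\Phi_{x_\beta}$ on $I(x_\alpha)\cap I(x_\beta)$ for uncountably many $\alpha$ says nothing about the $\Phi_{x_\alpha}$ off $I(x_\beta)$, so it is compatible with nontriviality of every $\Phi^{\Acal}$.

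Here is a concrete counterexample. Take any $\Phi$ satisfying the hypothesis, fix $\beta$, and redefine each $\Phi_{x_\alpha}$ to equal $\Phi_{x_\beta}$ on $I(x_\alpha)\cap I(x_\beta)$. By coherence this is a finite change to each member. So coherence is preserved, and so is the hypothesis, since triviality of $\Phi^{\Acal}$ is unaffected by finite changes to its members. Yet now $\{\alpha,\beta\}\notin\mathbb{Q}$ for every $\alpha\neq\beta$, so $\{\beta\}\Vdash Y=\{\beta\}$. The missing idea is the standard ccc argument. If $1_{\mathbb{Q}}$ forced $Y$ to be countable, the ccc would give $\gamma<\omega_1$ with $1_{\mathbb{Q}}\Vdash Y\subseteq\gamma$, contradicting $\{\gamma\}\Vdash\gamma\in Y$. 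So some $p_0$ forces $Y$ uncountable, and you force with $\mathbb{Q}$ below $p_0$, which is still ccc.

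The ccc step is also left unfinished, and the mechanism you anticipate cannot work. $\Phi^{\Acal}$ is itself coherent, so any single pair $\Phi_F,\Phi_{F'}$ disagrees only finitely often; there is no way to get infinitely many disagreements. Lemma \ref{destructible_equiv_lemma} is also not the relevant tool: its index set $\{\bigwedge F\mid F\in\Acal\}$ need not be directed. You only need the trivial fact that if all pairs from an uncountable subfamily agreed on their common domains, then the union of the $\Phi_F$ would trivialize it.

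The closure is already contained in your own thinning. Uniformize both the finite set $e$ (note $e\subseteq I(\bigwedge F)$ for every $F$ in the thinned family) and the values $\Phi_F\upharpoonright e$. Then $\Phi_F$ and $\Phi_{F'}$ agree on $e$. Hence the disagreement supplied by nontriviality of the thinned family automatically lies off $e$, and it transfers to every cross pair $\alpha\in F$, $\beta\in F'$, as you explained. This plays the role that $A_x\cap B_x=\emptyset$ plays in Kunen's gap argument.
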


\section{A linear iteration} \label{lin-section}
In \cite[2.1]{To98}, Todorcevic proved PFA implies
there is a forcing extension with the same
$\mathscr{P}(\omega_1)$ in which there is a 
nontrivial coherent family indexed by a cofinal suborder of $(\omega^\omega,<^*)$. 
In particular, MA holds in this forcing extension.
Moreover the conclusion in Todorcevic's result (including that MA holds) follows from a fragment of PFA
which does not require large cardinals for its consistency.

In this section demonstrate a different method for forcing
a fragment of $\mathrm{MA}_{\aleph_1}$ together with $\lim^1\mathbf{A} \ne 0$.
This method is analogous
to work of Baumgartner, Frankiewicz, and Zbierski \cite{BFZ}
in which they show how to 
force via a c.c.c. poset MA($\sigma$-linked)
together with the assertion
that any Boolean algebra of cardinality at most $2^{\aleph_0}$ 
can be embedded into $\mathscr{P}(\omega)/\Fin$.

\begin{thm} \label{MA_thm}
    Suppose $\kappa$ is uncountable and $\kappa^{<\kappa}=\kappa$. 
    There is a ccc forcing extension in which $2^{\aleph_0}=\kappa$, MA($\sigma$-linked) holds, and $\lim^1\mathbf{A}\neq0$. 
\end{thm}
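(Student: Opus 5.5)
We build a finite support iteration $\langle \Pbb_\alpha,\dot\Qbb_\alpha\mid\alpha<\kappa\rangle$ of ccc posets of size $<\kappa$, in the style of Baumgartner--Frankiewicz--Zbierski. Alongside it we build a $<^*$-increasing, $<^*$-cofinal sequence $\langle x_\alpha\mid\alpha<\kappa\rangle$ and a coherent family $\Phi=\langle\phi_{x_\alpha}\rangle$ indexed by it. Fix a bookkeeping function, possible since $\kappa^{<\kappa}=\kappa$, that lists every $\Pbb_\kappa$-name for a $\sigma$-linked poset on an ordinal $<\kappa$ cofinally often.

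At stage $\alpha$ we force with a product or two-step iteration of two posets.
\begin{itemize}
\item[(a)] The $\sigma$-linked poset $\dot\Qbb'_\alpha$ handed to us by the bookkeeping.
\item[(b)] A Hechler-type poset, a one-step version of Definition \ref{forcing_def}, adding $x_\alpha$ and $\phi_{x_\alpha}$. Here $x_\alpha$ dominates all reals of $V^{\Pbb_\alpha}$ (in particular all earlier $x_\beta$), and $\phi_{x_\alpha}\colon I(x_\alpha)\to\Zbb$ coheres with every $\phi_{x_\beta}$, $\beta<\alpha$. Its conditions are pairs (finite approximation $s$ of $x_\alpha$ together with a finite partial $\phi$, finite set of $\beta$'s whose agreement is promised beyond the current domain). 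Because the earlier family is coherent, this poset is $\sigma$-centered.
\end{itemize}
The standard arguments give the following. The iteration is ccc and has size $\kappa$, so $2^{\aleph_0}=\kappa$. By bookkeeping and a reflection argument on names, which uses ccc and the cofinality of $\kappa$, MA($\sigma$-linked) holds. Since every real appears at some stage $<\kappa$, it is dominated by a later $x_\alpha$, so $\{x_\alpha\}$ is cofinal.

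The main obstacle is nontriviality. A trivialization $\Psi$ of $\Phi$ would appear at some stage $\alpha<\kappa$, but later $\sigma$-linked posets could trivialize the initial segment $\Phi\upharpoonright\alpha$. In fact MA($\sigma$-linked) itself will try to do this: by Lemma \ref{destructible_equiv_lemma}, whenever $\mathbb{T}_\Phi$ is ccc and nice enough, it is a candidate. The one-step argument from the proof of the nontriviality lemma in Section \ref{nonlin-section} still handles names that are fixed at stage $\alpha$. That argument uses $x_\alpha$'s genericity to disagree with $\Psi$ infinitely often on $I(x_\alpha)$. However, it only shows that $\Psi$ fails to trivialize at stage $\alpha$; it does not control what happens afterwards.

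To close this gap I would use Section \ref{trivializing-freezing-section} and freeze an $\omega_1$-subfamily. At an early stage, choose $\omega_1$ many of the $x_\alpha$ and apply Lemma \ref{indestructible_lem}. This first requires checking its hypothesis, that every $\Phi^{\Acal}$ is nontrivial, which I would prove by the same genericity argument applied to the Hechler-added reals. Lemma \ref{indestructible_lem} then gives a ccc extension in which this $\omega_1$-subfamily cannot be trivialized by any $\aleph_1$-preserving forcing. Interleave this freezing forcing into the iteration. The remaining tail is ccc and hence $\aleph_1$-preserving, so it cannot trivialize the frozen subfamily. Therefore the full family $\Phi$, which restricts to it, remains nontrivial in $V^{\Pbb_\kappa}$. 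If the freezing forcing is not itself $\sigma$-linked, this only affects the bookkeeping and not MA($\sigma$-linked), since only finitely many of the iterands are forced to be different.

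The delicate point I expect is making the hypothesis of Lemma \ref{indestructible_lem} hold for every uncountable $\Acal$ in that intermediate model. The fallback is to choose the $\omega_1$ reals as $\omega_1$ Cohen-generic Hechler stages, so that a $\Delta$-system argument plus genericity handles each $\Acal$.
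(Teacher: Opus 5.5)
There is a genuine gap, and it is in the step you treated as routine. Your stage poset (b) is, up to presentation, the paper's $E$-stage $\mathbb{T}_{\Phi\upharpoonright\alpha}\times\dot\Hbb$. Adding $\phi_{x_\alpha}$ that coheres with every earlier $\phi_{x_\beta}$, with $x_\alpha$ dominating them, is the same as adding a trivialization of $\Phi\upharpoonright\alpha$. So your architecture is the paper's, but your claim that (b) is $\sigma$-centered ``because the earlier family is coherent'' is false.

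\begin{itemize}
\item Two conditions with the same finite part and promise sets $\{\beta\}$, $\{\beta'\}$ are incompatible as soon as $\phi_{x_\beta}$ and $\phi_{x_{\beta'}}$ differ at a point outside the current finite domain.
\item At any $\alpha$ of uncountable cofinality, $\Phi\upharpoonright\alpha$ is nontrivial in $V^{\Pbb_\alpha}$. If (b) were even $\sigma$-linked, you could pigeonhole cofinally many $\beta<\alpha$ into one linked piece with the same decided finite error set. That would produce a trivialization already in $V^{\Pbb_\alpha}$, a contradiction.
\item By Lemma \ref{destructible_equiv_lemma}, (b) is ccc exactly when item \ref{eq} holds for $\Phi\upharpoonright\alpha$. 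Keeping item \ref{eq} true while arbitrary $\sigma$-linked posets are interleaved is the real content of the proof.
\end{itemize}

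The paper's lemma in Section \ref{lin-section} shows that $\Pbb_\varepsilon$ forces $\mathbb{T}_{\Phi\upharpoonright\varepsilon}$ to be ccc. It uses a dense set of conditions that:
\begin{itemize}
\item decide the linked piece at non-$E$ coordinates;
\item decide the working parts $(\psi_{\alpha,p},F_{\alpha,p},s_{\alpha,p})$ at $E$-coordinates;
\item satisfy $F_{\alpha,p}\subseteq\dom(p)$;
\item have $\dom(\psi_{\alpha,p})$ independent of $\alpha$.
\end{itemize}
A $\Delta$-system argument then amalgamates $p_\alpha,p_\beta$ into a condition forcing $\Phi_{h_\alpha}=\Phi_{h_\beta}$. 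Induction with Lemma \ref{destructible_equiv_lemma} gives ccc of the whole iteration. Your proposal has nothing in place of this argument.

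Conversely, nontriviality is not an obstacle. Since $\kappa^{<\kappa}=\kappa$ implies $\kappa$ is regular and the iteration is ccc, any putative trivialization $\Psi$ in $V^{\Pbb_\kappa}$ already lies in some $V[G_\alpha]$. Genericity at stage $\alpha$ makes $\phi_{x_\alpha}$ disagree with $\Psi$ infinitely often on $I(x_\alpha)$. That is an absolute fact about two fixed reals, so no later forcing can undo it. Later stages trivializing initial segments $\Phi\upharpoonright\alpha$ is harmless; your own poset (b) does exactly that at every stage.

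Your proposed fix also breaks the construction. Suppose you freeze an $\omega_1$-subfamily via Lemma \ref{indestructible_lem}. At every later stage, $\phi_{x_\alpha}$ trivializes that subfamily, so the tail of the iteration cannot preserve $\aleph_1$, contradicting your claim that the tail is ccc. Put differently, for $\kappa>\aleph_1$ every $\aleph_1$-sized subfamily of your final $\Phi$ is bounded in $\kappa$ and hence trivial, so nothing can be frozen. In the paper, freezing is the tool Section \ref{knaster-section} uses to prove $\lim^1\mathbf{A}=0$, not to preserve nonvanishing.
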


Theorem \ref{MA_thm} follows from the usual MA bookkeeping (see \cite[Ch. VIII]{set_theory:kunen}) plus the following lemma:
\begin{lem}
    Suppose 
    $\langle \mathbb{P}_\alpha,\dot{\mathbb{Q}}_\alpha \mid \alpha<\varepsilon\rangle$ is a finite support iteration such that for some $E\subseteq\varepsilon$,
    \begin{itemize}
        \item If $\alpha\in\varepsilon\setminus E$ then $\dot\Qbb_\alpha$ is $\sigma$-linked;
        \item If $\alpha\in E$ then $\dot \Qbb_\alpha$ is the product $\mathbb{T}_{\Phi\upharpoonright\alpha}\times  \mathbb{\dot H}$.
    \end{itemize}
    Here $\Phi\upharpoonright\alpha$ is defined recursively to be the coherent family indexed by the Hechler reals $\{h_\beta\mid\beta\in E\cap \alpha\}$ and whose value at $h_\beta$ is the restriction of the trivialization to $\Phi\upharpoonright\beta$ generically added at stage $\beta$ to the area under the graph of $h_\beta$.
    
    Then $\Pbb_\varepsilon$ forces that $\mathbb{T}_{\Phi\upharpoonright\varepsilon}$ is ccc. 
    If additionally $\operatorname{cof}(\varepsilon)>\aleph_0$ and $E$ is unbounded in $\varepsilon$, then $\Phi\upharpoonright\varepsilon$ is nontrivial and indexed by a cofinal suborder of $(\omega^\omega,<^*)$. 
\end{lem}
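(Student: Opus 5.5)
We argue by induction on $\varepsilon$ that $\Pbb_\varepsilon \Vdash$ ``$\mathbb{T}_{\Phi\upharpoonright\varepsilon}$ is ccc''. By Lemma \ref{destructible_equiv_lemma}, item \ref{eq}, it suffices to show: whenever $\dot Y$ is a $\Pbb_\varepsilon$-name for an uncountable subset of the index set $\{h_\beta \mid \beta\in E\cap\varepsilon\}$, it is forced that there are $\beta\neq\gamma$ with $h_\beta, h_\gamma\in \dot Y$ and $\Phi_{h_\beta}=\Phi_{h_\gamma}$ on $I(h_\beta)\cap I(h_\gamma)$. In the ground model, pick conditions $p_\xi$ and distinct $\beta_\xi\in E\cap\varepsilon$ for $\xi<\omega_1$ with $p_\xi\Vdash h_{\beta_\xi}\in\dot Y$. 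Extend each $p_\xi$ so that $\beta_\xi\in\operatorname{supp}(p_\xi)$, and so that $p_\xi(\beta_\xi)=(\dot t_\xi,\dot h_\xi)$ where $\dot t_\xi$ is a condition of $\mathbb{T}_{\Phi\upharpoonright\beta_\xi}$ and $\dot h_\xi$ a Hechler condition. Further extend so that $p_\xi\upharpoonright\beta_\xi$ decides $\psi_{t_\xi}$ (a finite partial function), the Hechler stem $s_\xi$, and enough of the Hechler function to fix the finite set $I(h_{\beta_\xi})\setminus\dom(\psi_{t_\xi})$ relevant up to a fixed height $N$. Then apply a $\Delta$-system to the supports, and pigeonhole so that $\psi_{t_\xi}=\psi$, $s_\xi=s$, and $N$ are constant, and so that the conditions at root coordinates in $\varepsilon\setminus E$ lie in a common linked piece.

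The key step is to find $\xi<\eta$, say with $\beta_\xi<\beta_\eta$, such that $p_\xi$ and $p_\eta$ have a common extension $r$ forcing $\Phi_{h_{\beta_\xi}}=\Phi_{h_{\beta_\eta}}$ on the common domain. By definition, $\Phi_{h_\beta}$ is the generic trivialization $\tau_\beta=\bigcup\psi$ added at stage $\beta$, restricted to $I(h_\beta)$. Hence we need $\tau_{\beta_\eta}$ to agree with $\tau_{\beta_\xi}$ on $I(h_{\beta_\xi})\cap I(h_{\beta_\eta})$. The idea is to build $r$ by strengthening $p_\eta(\beta_\eta)$: we add $h_{\beta_\xi}$ to the $F$-part of the $\mathbb{T}_{\Phi\upharpoonright\beta_\eta}$-condition. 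This requires that $\dom(\psi)$ already contain every point where $\Phi_{h_{\beta_\xi}}$ differs from the members of $F_{t_\eta}$, and that $\Phi_{h_{\beta_\xi}}$ agree with $\psi$ on $\dom(\psi)\cap I(h_{\beta_\xi})$. The second requirement holds because $\psi_{t_\xi}=\psi$ and $t_\xi$ sits inside $\tau_{\beta_\xi}$. For the first, we first enlarge $\psi$ below $\beta_\eta$ so that it decides those finitely many disagreements; this is exactly the $\mathbb{T}$-ordering move. We use the ccc of the earlier $\mathbb{T}_{\Phi\upharpoonright\beta}$ (induction hypothesis) and the $\sigma$-linkedness of the other coordinates to make the root parts compatible, while the tails are disjoint. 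Once $h_{\beta_\xi}\in F$, the ordering of $\mathbb{T}_{\Phi\upharpoonright\beta_\eta}$ forces every later value of $\tau_{\beta_\eta}$ on $I(h_{\beta_\xi})$ to equal $\Phi_{h_{\beta_\xi}}$. Values already in $\psi$ agree by the uniformization. This gives the required equality.

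The main obstacle is this amalgamation. We must check that disagreement sets between $\Phi_{h_{\beta_\xi}}$ and the elements of $F_{t_\eta}$ can be absorbed without conflicting with $\psi$. This is where the finite-height uniformization and the coherence of $\Phi\upharpoonright\beta_\eta$ are used, and it mirrors Kunen's argument in \cite{Baumgartner:PFA}.

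For the second part, assume $\operatorname{cof}(\varepsilon)>\aleph_0$ and $E$ is unbounded. Cofinality of the index set follows because any real appears at a stage $\alpha<\varepsilon$ and is dominated by $h_\beta$ for any $\beta\in E\setminus\alpha$. For nontriviality, suppose $\dot\Psi$ trivializes the family. Since $\dot\Psi$ is a countable object, it appears in $V^{\Pbb_\alpha}$ for some $\alpha<\varepsilon$. Pick $\beta\in E$ with $\beta>\alpha$. A genericity argument for the $\mathbb{T}_{\Phi\upharpoonright\beta}$ factor then shows that $\tau_\beta$ disagrees with $\dot\Psi$ infinitely often below $h_\beta$: given a condition, choose a point under the Hechler function outside the domains of the $\Phi_x$ for $x\in F$ and outside $\dom(\psi)$, and assign it a value different from $\Psi$. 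This contradicts $\Psi=^*\Phi_{h_\beta}$.
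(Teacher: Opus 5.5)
Your reduction to item \ref{eq} of Lemma \ref{destructible_equiv_lemma} and your argument for nontriviality and cofinality agree with the paper. The gap is the amalgamation step: you flag it as the main obstacle but do not carry it out, and the move you suggest for it fails.

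To add $h_{\beta_\xi}$ to the $F$-part at coordinate $\beta_\eta$, every disagreement between $\Phi_{h_{\beta_\xi}}$ and $\Phi_x$, $x\in F_{t_\eta}$, must lie in $\dom(\psi)$ \emph{already}. Suppose instead you enlarge $\psi$ at $\beta_\eta$ to cover such a point $(n,m)\in I_x\cap I(h_{\beta_\xi})$. The $\mathbb{T}$-ordering then forces the new value to be $\Phi_x(n,m)\neq\Phi_{h_{\beta_\xi}}(n,m)$. This destroys the equality you want whenever $(n,m)\in I(h_{\beta_\eta})$, which you cannot exclude.

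Nor can these disagreements be ``decided'' by finite-height bookkeeping. They compare the generic trivializations at $\beta_\xi$ and at the coordinates of $F_{t_\eta}$, which typically sit in different tails of the $\Delta$-system, and nothing in $p_\xi\cup p_\eta$ relates them. Controlling them means controlling the $F$-parts at $\beta_\xi$ and at those coordinates, hence at the coordinates in \emph{their} $F$-parts, and so on.

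The same problem blocks your root coordinates in $E$, whose $\psi$'s you have not even made constant. Two conditions $(\psi,F_1)$ and $(\psi,F_2)$ are compatible only if the cross-disagreements between $F_1$ and $F_2$ lie in $\dom(\psi)$. The ccc of $\mathbb{T}_{\Phi\upharpoonright\gamma}$ gives no compatibility for a prescribed pair.

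The paper resolves this recursion by first passing to a dense set of conditions $p$ with the following properties:
\begin{itemize}
\item they decide the linked pieces, the $\psi$'s, the $F$'s and the stems;
\item they satisfy $F_{\alpha,p}\subseteq\dom(p)$;
\item \emph{all} $\psi_{\alpha,p}$, $\alpha\in\dom(p)\cap E$, have one common domain $B$.
\end{itemize}
Density of this set is itself proved by induction on $\varepsilon$. After thinning so that $\psi_{\beta_\xi,p_\xi}$ and all root data are constant, $p_\xi$ and $p_\eta$ are amalgamated by leaving every $\psi$ unchanged and setting $F_{\gamma,q}=\dom(q)\cap\gamma\cap E$ at each $E$-coordinate $\gamma$.

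By induction on $\gamma$ these are conditions. If $\delta'<\delta$ both lie in $F_{\gamma,q}$, then $\delta'\in F_{\delta,q}$ and $\dom(\psi_{\delta,q})=B$, so $\Phi_{h_{\delta'}}$ and $\Phi_{h_\delta}$ disagree only inside $B$. Since no $\psi$ grows, the ordering constraint is vacuous. Finally, $\beta_\xi\in F_{\beta_\eta,q}$ together with $\psi_{\beta_\xi,q}=\psi_{\beta_\eta,q}$ gives $\Phi_{h_{\beta_\xi}}=\Phi_{h_{\beta_\eta}}$ on the common domain. Your proposal is missing this kind of global saturation: a uniform $\psi$-domain plus adding every earlier $E$-coordinate of the support to each $F$.
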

\begin{proof}
    For each $\alpha\in\varepsilon\setminus E$,
    fix $\mathbb{P}_\alpha$-names
    $\langle \dot A^\alpha_n\mid n<\omega\rangle$
    for linked subsets of ${\dot {\mathbb{Q}}}_\alpha$
    which cover $\dot {\mathbb{Q}}_\alpha$.
    
    Let $D$ denote the set of all conditions $p\in\Pbb_\varepsilon$ satisfying
    the following conditions: 
       \begin{enumerate}
        \item \label{decide_linked}
        For each $\alpha\in\dom(p)\setminus E$, there is an $n_{p,\alpha}$ such that $p\upharpoonright\alpha\Vdash p(\alpha)\in \dot A^\alpha_{n_{p,\alpha}}$;
        \item \label{decide_working_parts}
        For each $\alpha\in\dom(p)\cap E$, there are $\psi_{\alpha,p},F_{\alpha,p},s_{\alpha,p}$ such that $p\upharpoonright\alpha$ forces that the $\mathbb{T}_{\Phi\upharpoonright\alpha}$ component of $p(\alpha)$ is $(\check\psi_{\alpha,p},\{h_\beta\mid\beta\in\check F\})$ and the Hechler component has stem $s_{\alpha,p}$;
        \item \label{include_Fap}
        If $\beta\in F_{\alpha,p}$ then $\beta\in\dom(p)$;
        \item \label{dom(psi)_indep} 
        The domain of $\psi_{\alpha,p}$ is independent of $\alpha$. 
    \end{enumerate}
    \begin{claim}
    $D$ is dense.
    \end{claim}
    \begin{proof}
    Define $D_0$ to consist of those conditions which satisfy (\ref{decide_linked}) and 
    (\ref{decide_working_parts}) and $D_1$ to consist of
    those conditions which satisfy (\ref{decide_linked})--(\ref{include_Fap}).
    It follows from a routine induction on $\varepsilon$ that $D_0$ is dense.
    Next observe that any condition in $D_0$ can be extended to a condition in $D_1$ by increasing the domain
    of $p$ but making the extension trivial on the new coordinates.

    To show that $D$ is dense, we will prove by induction on $\varepsilon$
    that for any finite $A \subseteq \omega \times \omega$,
    those $p \in D_1$ with $\alpha \mapsto \dom(\psi_{\alpha,p})$ taking a constant value containing $A$ are dense.
    To this end, let $p$ be a given condition and $A \subseteq \omega \times \omega$ be a given finite set.
    By extending $p$ if necessary, we may assume $p \in D_1$.
    Set $\beta:=\max(\dom(p))$.
    By our induction hypothesis there is a $q \in D_1 \cap \mathbb{P}_\beta$ with $q \leq_{\mathbb{P}_\beta} p\upharpoonright\beta$ and such that all the $\psi_{\alpha,q}$ have the same constant domain $B$ which contains $\dom(\psi_{\beta,p}) \cup A$.
    Now extend $p$ to a condition $r$ with $r\upharpoonright\beta=q$ and $\dom(\psi_{\beta,r})=B$ and
    $\psi_{\beta,r}(n,m)=\psi_{\beta,p}(n,m)$ if defined, else $\psi_{\beta,r}(n,m)=\psi_{\alpha,q}(n,m)$
    if there is an $\alpha \in F_{\beta,p}$ such that $m \not > s_{\alpha,q}(n)$, and $0$ otherwise.
    The second item is well-defined since if $\alpha,\gamma \in F_{\beta,p}$, then $p$ forces $\Phi_\alpha$ and
    $\Phi_\gamma$ coincide outside $\dom(\psi_{\beta,p})$.
    It is also a condition since by (\ref{include_Fap}), $F_{\beta,p} \subseteq \dom(q)$
    and the coherent family we are trivializing is precisely the $\psi_\alpha$'s restricted to the Hechler reals.
    \end{proof}   
    
    Going forward, we will assume all conditions come from $D$ and will verify item \ref{eq} of Lemma \ref{destructible_equiv_lemma}. 
    Suppose $p$ forces that $\dot Y\subseteq E$ is uncountable. 
    Fix an uncountable $A\subseteq E$ such that for each $\alpha\in A$ there is $p_\alpha\leq p$ such that $p_\alpha\Vdash\alpha\in\dot Y$; we may assume $\alpha\in\dom(p_\alpha)$. 
    By thinning out the family if necessary, we may assume
    \begin{itemize}
        \item $\psi_{p_\alpha,\alpha}$ is independent of $\alpha$; 
        \item $\langle\dom(p_\alpha)\mid\alpha\in A\rangle$ is a $\Delta$-system with root $r$;
        \item for each $\beta\in r\cap E$, the values of $\psi_{p_\alpha,\beta}$ and $s_{p_\alpha,\beta}$ are independent of $\alpha$;
        \item for each $\beta\in r\setminus E$, the value of $n_{p,\alpha}$ is independent of $\alpha$. 
    \end{itemize}
    Fix any $\alpha<\beta$ in $A$; it is sufficient to construct some $q\leq p_\alpha,p_\beta$ such that $q$ forces $\Phi_{h_\alpha}=\Phi_{h_\beta}$. 
    We set $\dom(q)=\dom(p_\beta)\cup\dom(p_\alpha)$ and define $q(\gamma)$ recursively as follows:
    \begin{itemize}
        \item If $\gamma\in \dom(q)\cap E$, we set $F_{\gamma,q}=\dom(q)\cap\gamma\cap E$ and $s_{q,\gamma}=s_{p_{\alpha},\gamma}\cup s_{p_\beta,\gamma}$; verifying that $q\upharpoonright\gamma$ forces this defines a condition in $\mathbb{T}_{\Phi\upharpoonright\gamma}$ is an easy induction on $\gamma$.
        \item If $\gamma\in\dom(q)\setminus (E\cup r)$, then exactly one of $p_\alpha(\gamma)$ or $p_\beta(\gamma)$ is defined; we set $q(\gamma)$ to be whichever one is defined. 
        \item If $\gamma\in(\dom(q)\cap r)\setminus E$, then since $q\upharpoonright\gamma\leq p_\alpha\upharpoonright\gamma,p_\beta\upharpoonright\gamma$, $q$ forces that $p_\alpha(\gamma),p_\beta(\gamma)$ lie in the same linked set; we set $q(\gamma)$ to be a $\Pbb_\gamma$ name for a common lower bound. 
    \end{itemize}
    Then $q$ forces $\Phi_{h_\alpha}=\Phi_{h_\beta}$ since $\psi_{\alpha,q}=\psi_{\beta,q}$ and $\alpha\in F_{\beta,q}$. 

    For the ``moreover'' statement, we first verify that $\Phi\upharpoonright\varepsilon$ is nontrivial. Induction using Lemma \ref{destructible_equiv_lemma} implies that $\Pbb_\varepsilon$ is ccc, so any putative trivialization $\Theta\colon \omega\times\omega\to\ZZ$ is in $V[G_\alpha]$ for some $\alpha<\varepsilon$. 
    Fix $\beta\in E\setminus (\alpha+1)$. 
    Then for each $n<\omega$,
    \[\{(\psi,F,s)\in \mathbb{T}_{\Phi\upharpoonright\beta}\times\Hbb\mid\exists(m,k)\left(m>n\wedge s(m)>k \wedge \psi(m,k)\neq\Theta(m,k)\right)\}\]
    is dense, so by genericity, $\Phi_{h_\beta}$ disagrees infinitely often with $\Theta$. 
    Similarly, since $h_\beta$ is Hechler generic over $V[G_\beta]$, every $x\in V[G_\beta]$ is eventually below $h_\beta$ so $\{h_\beta\mid\beta\in E\}$ is cofinal in $(\omega^\omega,<^*)^{V[G]}$.
\end{proof}

\section{Iterations of Knaster forcings} \label{knaster-section}
The main result of this section is the following:
\begin{thm} \label{Knaster_thm}
    Suppose $\operatorname{cof}(\kappa)>\aleph_1$ and $\langle\Pbb_i,\dot\Qbb_i\mid i<\kappa\rangle$ is a length $\kappa$ finite support iteration of nontrivial Knaster forcings. 
    Then $\Pbb_\kappa$ forces $\lim^1\mathbf{A}=0$. 
\end{thm}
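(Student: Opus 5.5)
Let $G$ be $\Pbb_\kappa$-generic and fix in $V[G]$ a $1$-coherent family $\Phi=\langle\phi_x\mid x\in\omega^\omega\rangle$; the task is to show $\Phi$ is trivial. The plan follows Kamo's argument and combines a reflection step with the characterization in Lemma \ref{destructible_equiv_lemma}. Since each $\dot\Qbb_i$ is forced to be Knaster, finite support iterations of Knaster posets are Knaster, and $\operatorname{cof}(\kappa)>\aleph_1$, the following hold:
\begin{enumerate}
\item every real and every $\aleph_1$-sized set of reals of $V[G]$ lies in some intermediate model $V[G_i]$;
\item the tail forcing $\Pbb_\kappa/G_i$ is Knaster in $V[G_i]$.
\end{enumerate}
Each $\dot\Qbb_i$ is nontrivial, so the iteration adds Cohen reals cofinally often; in fact at every limit stage of countable cofinality a Cohen real is added over the earlier model. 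Consequently no intermediate model's reals are dominating, and $V[G]\models\mathfrak{d}$ is large.

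First, trivializing $\Phi$ amounts to trivializing it on cofinally many reals. Using coherence, a trivialization of $\Phi\upharpoonright X$ for a $<^*$-cofinal $X$ extends to all of $\omega^\omega$. Suppose, toward a contradiction, that $\Phi$ is nontrivial. By a closure/L\"owenheim--Skolem argument along the iteration (using $\operatorname{cof}(\kappa)>\aleph_1$ and a name $\dot\Phi$), choose $i<\kappa$ of uncountable cofinality such that $\Phi\upharpoonright(\omega^\omega\cap V[G_i])\in V[G_i]$. For unboundedly many such $i$, this restriction should already be nontrivial in $V[G_i]$; if it were trivial for cofinally many $i$, one would try to patch the trivializations together and obtain a trivialization of $\Phi$. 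The patching is the delicate point, since trivializations are not unique. Kamo handles this by counting, using that any two trivializations of the same family differ by a function which is finite-to-one controlled on $\omega^\omega\cap V[G_i]$.

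Second, take such an $i$ with $\Phi_i:=\Phi\upharpoonright(\omega^\omega\cap V[G_i])$ nontrivial in $V[G_i]$ but trivialized in $V[G]$ by a Knaster forcing. By Lemma \ref{destructible_equiv_lemma}, $\Phi_i$ is then destructible: every uncountable $Y$ contains a pair $x,y$ on which $\phi_x,\phi_y$ agree on the common domain. I expect this to be the main obstacle, namely showing that nontriviality in $V[G_i]$ forces a witness to \emph{in}destructibility, so that Lemma \ref{indestructible_lem} or its Knaster clause gives a contradiction.

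The plan for that step is to exploit the Cohen reals added by the stages after $i$. Pick an $\omega_1$-sequence $\langle x_\alpha\rangle$ of reals added at stages $i_\alpha$ cofinal in a stage of cofinality $\omega_1$, with each $x_\alpha$ dominating $V[G_{i_\alpha}]$ on a Cohen-generic set of coordinates. Genericity should then show that the hypothesis of Lemma \ref{indestructible_lem} holds for $\langle\phi_{x_\alpha}\rangle$ in an intermediate model: for any uncountable family of pairwise disjoint finite $F$, the meets $\bigwedge_{\alpha\in F}x_\alpha$ still code a nontrivial piece of $\Phi$, because a trivialization would lie in an earlier model and the later Cohen coordinates would disagree with it. Since the remainder of the iteration is Knaster, the Knaster clause of Lemma \ref{indestructible_lem} then says $\Phi$ stays nontrivial on this subfamily. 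That contradicts the fact that $\Phi$ was trivialized in $V[G]$. Making the Cohen-genericity argument precise for arbitrary nontrivial Knaster iterands (rather than Cohen forcing itself) is where most of the work will go.
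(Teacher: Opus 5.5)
\textbf{There is a genuine gap: the proposal never reaches a valid contradiction.} You assume toward a contradiction that $\Phi$ is nontrivial in $V[G]$. Your second step then picks $i$ with $\Phi_i$ nontrivial in $V[G_i]$ ``but trivialized in $V[G]$ by a Knaster forcing,'' and it ends by contradicting ``the fact that $\Phi$ was trivialized in $V[G]$.'' Neither fact is available. $\Phi_i$ is a restriction of $\Phi$, which you assumed nontrivial, and triviality of $\Phi$ is the very statement being proved. Showing that some $\omega_1$-subfamily $\langle\phi_{x_\alpha}\rangle$ stays nontrivial through the Knaster tail gives no contradiction by itself. For example, when all iterands are Cohen forcing, such $x_\alpha$ are $<^*$-unbounded in $V[G]$, so no single value $\phi_z$ trivializes them.

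What is missing is a \emph{non-extension} statement, which is Lemma \ref{no_ext_lem}. Your third paragraph's route to the hypothesis of Lemma \ref{indestructible_lem} matches the paper: the meets are everywhere unbounded, so Lemma \ref{unbdd_lem} applies. Once that hypothesis holds in $V[G\upharpoonright\alpha]$, the payoff must be the following.
\begin{itemize}
\item Let $y$ be the real read off from infinite maximal antichains in the $\omega$ iterands right after stage $\alpha$. The paper regroups the iteration into $\omega$-blocks so that such antichains exist.
\item The cut-off family $\Phi^y$ again satisfies the hypothesis of Lemma \ref{indestructible_lem}, so it is never trivialized by the Knaster tail.
\item But $\phi_y$ would trivialize $\Phi^y$. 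So $\Phi$ cannot be defined at $y$, contradicting that $\Phi$ is indexed by all of $\omega^\omega$.
\end{itemize}
This is how the paper shows the reflected family is \emph{trivial} in $V[G\upharpoonright\alpha]$ for every $\alpha$ of cofinality $\omega_1$ in a club. It also settles your worry about arbitrary Knaster iterands: the antichain-coded $y$, plus the common-support argument in the proof of Lemma \ref{no_ext_lem}, supplies all the Cohen genericity needed.

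\textbf{Two further steps need repair.} First, your reflection asks for $i$ with $\Phi\upharpoonright(\omega^\omega\cap V[G_i])\in V[G_i]$. A closure argument only yields this if $V[G_i]$ has fewer than $\kappa$ reals, which fails in general (e.g.\ if $2^{\aleph_0}\ge\kappa$ already in $V$). The paper instead reflects $\Phi^\alpha=\Phi\upharpoonright\{y_j\mid j<\alpha\}$, indexed by only $|\alpha|$ many Cohen reals. Nothing is lost, because $\{y_j\mid j<\kappa\}$ is everywhere unbounded, so Lemma \ref{unbdd_lem} transfers nontriviality. A $<^*$-cofinal index set, as in your first remark, is not available here.

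Second, the patching cannot be done from cofinally many trivial stages ``by counting.'' Trivializations from different stages need not cohere, since later stages can add reals dominating earlier ones on infinite sets. The paper proceeds as follows.
\begin{itemize}
\item It uses triviality on the stationary set $C\cap\operatorname{cof}(\omega_1)$.
\item It presses down: each trivialization is a real of $V[G\upharpoonright\alpha]$, so it lies in some earlier $V[G\upharpoonright\beta_\alpha]$. This gives a single $\beta$ and a stationary $S$ with $\Psi^\alpha\in V[G\upharpoonright\beta]$ for all $\alpha\in S$.
\item $y_\beta$ is Cohen over $V[G\upharpoonright\beta]$ and lies in the index set of every $\Phi^\gamma$ with $\gamma>\beta$. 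Hence any two such $\Psi^\alpha$ agree on all but finitely many columns, and any one of them trivializes $\Phi\upharpoonright\{y_j\mid j<\kappa\}$.
\end{itemize}
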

Most of the ideas in the proof of Theorem \ref{Knaster_thm} are already present in \cite{Kamo}, with an additional idea of using that a finite support iteration of length $\kappa$ codes a generic for adding $\kappa$ many Cohen reals through the supports.  
We first show that in a finite support iteration of nontrivial Knaster forcings, coherent families as in Lemma \ref{indestructible_lem} not only remain nontrivial but actually cannot extend to all of the continuum. 

\begin{lem} \label{no_ext_lem}
    Suppose $X=\{x_\alpha\mid\alpha<\omega_1\}$ and $\Phi$ are as in the hypotheses of Lemma \ref{indestructible_lem} and $\langle\Pbb_i,\dot\Qbb_i\mid i<\omega\rangle$ is a finite support iteration of Knaster forcings such that each $\dot\Qbb_i$ has an infinite antichain; let $\langle \dot q^i_j\mid j<\omega\rangle$ be $\Pbb_i$-names for an infinite maximal antichain in $\dot\Qbb_i$. 
    In $V^{\Pbb_\omega}$, let $y\in\omega^\omega$ be the sequence such that $y(n)$ is the unique $m$ such that $q^n_m\in G_n$ and let $\Phi^y$ be the coherent family indexed by $X$ defined by 
    \[\Phi^y_{x_\alpha}(n,m)=\left\{\begin{array}{cc}
         \Phi_{x_\alpha}(n,m)&m\leq y(n)  \\
         0&\text{otherwise} 
    \end{array}\right..\]
    Then $\Phi^y$ satisfies the hypothesis of Lemma \ref{indestructible_lem} in $V^{\mathbb{P}_\omega}$. 
    In particular, in all further extensions by a Knaster forcing, $\Phi$ does not extend to a coherent family indexed by $X\cup\{y\}$. 
\end{lem}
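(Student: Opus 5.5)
The plan is to verify the hypothesis of Lemma \ref{indestructible_lem} for $\Phi^y$ directly in $V^{\Pbb_\omega}$ by contradiction. Any would-be trivialization will be pulled back to a trivialization in $V$ of some $\Phi^{\Acal'}$, contradicting the hypothesis on $\Phi$ in $V$. The "in particular" clause is then derived from this.

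\textbf{Main step.} Suppose $p\in\Pbb_\omega$ forces that $\dot\Acal$ is an uncountable family of finite pairwise disjoint subsets of $\omega_1$ and that $\dot\Psi\colon\omega\times\omega\to\ZZ$ trivializes $(\Phi^y)^{\dot\Acal}$.
\begin{itemize}
\item For uncountably many $\alpha<\omega_1$, pick $p_\alpha\le p$, a finite $F_\alpha\subseteq\omega_1$ and $n_\alpha<\omega$ such that $p_\alpha$ forces $F_\alpha\in\dot\Acal$ and forces $\dot\Psi(k,j)=\Phi^y_{F_\alpha}(k,j)$ for all $(k,j)\in I(\bigwedge_{\gamma\in F_\alpha}x_\gamma)$ with $k\ge n_\alpha$.
\item Thin out so that the $F_\alpha$ are pairwise disjoint (a $\Delta$-system with empty root, which is possible since $\dot\Acal$ consists of pairwise disjoint sets), $n_\alpha=n$ is constant, and $\dom(p_\alpha)$ is a fixed finite set $s\subseteq\omega$.
\item Since a finite support iteration of Knaster forcings is Knaster, thin further so that the $p_\alpha$ are pairwise compatible.
\end{itemize}
Let $\Acal'=\{F_\alpha\}$; this is an uncountable family in $V$.

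Fix $k_0>\max(s\cup\{n\})$. Take $\alpha,\beta$ and $(k,j)$ with $k\ge k_0$ and $(k,j)$ in the common domain of $\Phi_{F_\alpha}$ and $\Phi_{F_\beta}$. Let $r\le p_\alpha,p_\beta$ be a common extension, which may be chosen with $k\notin\dom(r)$. Extend $r$ by putting $q^k_m$ at coordinate $k$ for some $m\ge j$, so the extension forces $y(k)\ge j$. This extension forces
\[\Phi_{F_\alpha}(k,j)=\Phi^y_{F_\alpha}(k,j)=\dot\Psi(k,j)=\Phi^y_{F_\beta}(k,j)=\Phi_{F_\beta}(k,j).\]
The outer two values are computed in $V$, so $\Phi_{F_\alpha}$ and $\Phi_{F_\beta}$ agree on their common domain above $k_0$. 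Hence $\Theta=\bigcup_\alpha \Phi_{F_\alpha}\upharpoonright([k_0,\omega)\times\omega)$, extended by $0$, is a well-defined function in $V$ trivializing $\Phi^{\Acal'}$. This contradicts the hypothesis on $\Phi$.

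\textbf{The ``in particular''.} First note that the hypothesis of Lemma \ref{indestructible_lem} is preserved by Knaster forcing. If a Knaster $\Qbb$ forced $\Psi$ to trivialize some $\Phi^{\dot\Acal}$, the same argument applies, using Knaster in place of the supports to get compatible deciding conditions, and yields a trivialization in the ground model. Now suppose $\Phi$ extends to a coherent family on $X\cup\{y\}$ with value $\Psi_y$ at $y$. Extend $\Psi_y$ by $0$ off $I(y)$. This function trivializes $\Phi^y$, since $\Phi_{x_\alpha}=^*\Psi_y$ on $I(x_\alpha\wedge y)$. It therefore trivializes $(\Phi^y)^{\Acal}$ for $\Acal$ the family of singletons, which is impossible.

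\textbf{Expected obstacle.} The delicate point is ensuring the common extension $r$ leaves coordinate $k$ free, so that $y(k)$ can be pushed above $j$. Supports of common extensions of $p_\alpha$ and $p_\beta$ might a priori grow. I would handle this by noting that $\Pbb_\omega$ restricted to the coordinates $\le\max s$ already contains a common extension of $p_\alpha$ and $p_\beta$, since both have support in $s$. Choosing $k_0$ above $\max s$ then keeps $k$ untouched.
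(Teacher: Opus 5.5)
Your proposal is correct and follows the paper's proof essentially step for step. Like the paper, you decide $F_i$ and the agreement level $n_i$, pigeonhole to a constant $n$ and support $R$, and use that the iteration is Knaster to get pairwise compatibility. The key step is also the same: a common extension can be taken with support below $\max(R)+1$, so $y(k)$ can be pushed up at larger coordinates $k$, after which you glue a ground-model trivialization of $\Phi^{\Acal^*}$. Your derivation of the ``in particular'' clause fills in a step the paper leaves implicit, and it is correct: the hypothesis is preserved by Knaster forcing, and you then apply it to the family of singletons.
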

\begin{proof}
    Suppose not and fix a condition $p$ such that $p$ forces some $\Acal=\langle \dot F_\alpha\mid i<\omega_1\rangle$ and $\Psi$ witnesses a failure for $\Phi^y$ to satisfy the hypotheses of Lemma \ref{indestructible_lem}. 
    For each $i<\omega_1$, fix some condition $p_i\leq p$ such that
    \begin{itemize}
        \item $p_i$ decides the value of $F_i$ to be some $F_i^*$;
        \item for some $n_i$, 
        \[p_i\Vdash\Phi_{F_i}=^{n_i}\Psi.\]
    \end{itemize}
    By the pidgeonhole principle, we may assume that for some $n<\omega$ and finite $R\subseteq\omega$, $n_i=n$ and $\dom(p_i)=R$ for each $i$. 
    Since a finite support iteration of Knaster forcings is Knaster, we may assume further that for each $i,j$, $p_i$ and $p_j$ are compatible. 
    Let $\Acal^*=\{F^*_i\mid i<\omega_1\}$; we claim that $\Acal^*$ witnesses a failure for $\Phi$ to satisfy the hypotheses of Lemma \ref{indestructible_lem}. 
    Indeed, for each $i,j$, since $p_i$ and $p_j$ have a common extension with domain bounded by $\max(R)+1$, we must have 
    \[\Phi_{F_i^*}=^{\max(n,\max(R)+1)}\Phi_{F_j^*},\]
    so gluing along indices with horizontal coordinate at least $\max(n,\max(R)+1)$ produces a trivialization of $\Phi^{\Acal^*}$. 
\end{proof}
The next lemma shows that if a coherent family is trivial on a sufficiently large index set, then it is already trivial. 
\begin{defn}
    $Y\subseteq\omega^\omega$ is \emph{everywhere unbounded} if for each infinite $E\subseteq\omega$ and $x\colon E\to\omega$, there is a $y\in Y$ such that $\{n\in E\mid x(n)\leq y(n)\}$ is infinite. 
\end{defn}
See \cite[Lemma 4.4]{ADLCM} for a proof of the following.
\begin{lem} \label{unbdd_lem}
    Suppose that $\Phi$ is a nontrivial coherent family indexed by some set $X$ and $Y\subseteq X$ is everywhere unbounded.
    Then $\Phi\upharpoonright Y$ is nontrivial. 
\end{lem}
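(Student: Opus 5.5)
We argue by contraposition: assuming $\Phi\upharpoonright Y$ is trivial, we build a trivialization of all of $\Phi$. Fix $\psi\colon\omega\times\omega\to\ZZ$ with $\psi\upharpoonright I(y)=^*\phi_y$ for every $y\in Y$. Replacing $\Phi$ by $\langle \phi_x-\psi\upharpoonright I(x)\mid x\in X\rangle$, which is still coherent and is trivial iff $\Phi$ is, we may assume $\phi_y=^*0$ for every $y\in Y$. It then suffices to show that $\phi_x=^*0$ for every $x\in X$, since then $0$ trivializes $\Phi$.

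So fix $x\in X$ and suppose toward a contradiction that $\phi_x(n,m)\neq 0$ for infinitely many $(n,m)\in I(x)$. For each $n$ the set $\{m\le x(n)\}$ is finite, so the set $E$ of $n$ such that $\phi_x(n,m)\neq0$ for some $m\le x(n)$ is infinite. For $n\in E$ let $z(n)$ be the least such $m$; then $z\colon E\to\omega$ and $z(n)\le x(n)$.

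By everywhere unboundedness of $Y$ applied to $E$ and $z$, there is a $y\in Y$ with $E':=\{n\in E\mid z(n)\le y(n)\}$ infinite. For $n\in E'$ the point $(n,z(n))$ lies in $I(x)\cap I(y)$, and $\phi_x(n,z(n))\neq0$. Since $y\in Y$ we have $\phi_y=^*0$, so $\phi_y(n,z(n))=0$ for all but finitely many $n\in E'$. Hence $\phi_x$ and $\phi_y$ disagree at infinitely many points of $I(x)\cap I(y)$, contradicting the coherence of $\Phi$. Therefore $\phi_x=^*0$ for each $x\in X$, and $\Phi$ is trivial.

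The only delicate step is choosing the witness function $z$ so that everywhere unboundedness yields infinitely many common disagreement points in a single $y$. Taking the \emph{least} bad $m$ below $x(n)$ in each column handles this: one point per column suffices, and $y(n)\ge z(n)$ places that point in $I(y)$. The rest is bookkeeping with $=^*$ and the normalization by $\psi$, which uses only that coherence is preserved under subtracting a single global function.
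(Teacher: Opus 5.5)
Your argument is correct. The paper does not prove this lemma itself; it only cites \cite[Lemma 4.4]{ADLCM}. Your write-up therefore gives a self-contained proof where the paper has only a reference.

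Your route runs as follows. You subtract a trivialization $\psi$ of $\Phi\upharpoonright Y$, which reduces the problem to showing $\phi_x=^*0$ for every $x\in X$. You then define one witness function $z$ on the infinite set $E$ of bad columns and apply the definition of everywhere unbounded once. This produces some $y\in Y$ such that $\phi_x$ and $\phi_y$ disagree at infinitely many points of $I(x)\cap I(y)$, contradicting coherence.

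Compared with the citation, the argument has some advantages:
\begin{itemize}
\item It needs no structure on $X$, such as directedness or cofinality.
\item It shows that the same $\psi$ that trivializes $\Phi\upharpoonright Y$ trivializes all of $\Phi$.
\item It makes clear that everywhere unboundedness is used only through one partial function per index $x$.
\end{itemize}

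Two small remarks. First, taking the \emph{least} bad $m$ is not actually delicate: any choice of a bad $m\le x(n)$ in each column of $E$ works equally well. Second, you implicitly use the convention $I(y)=\{(n,m)\mid m\le y(n)\}$. This matches the paper's usage, for instance in the definition of $\Phi^y$ in Lemma \ref{no_ext_lem}. Under a strict-inequality convention you would simply apply everywhere unboundedness to $z+1$ instead of $z$.
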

We are now ready to prove Theorem \ref{Knaster_thm}:

\begin{proof}
    By reindexing along a closed cofinal subset of $\kappa$ consisting of limit ordinals using that a finite support iteration of Knaster forcings is Knaster, we may assume $\kappa=\operatorname{cof}(\kappa)$ and each $\dot\Qbb_i$ is forced to have an infinite antichain. 
    Let $\langle \dot q^i_n\mid n<\omega\rangle$ be $\Pbb_i$-names for an infinite antichain in $\Qbb_i$ and let $y_i$ be the real such that $y_i(n)=m$ if and only if 
    \[q^{\omega i+n}_m\in G_{\omega i+n}.\]
    Observe that $y_i$ is Cohen generic over $V[G\upharpoonright i]$. 
    In particular, since every $E\subseteq\omega$ and $f\colon E\to\omega$ appear at some proper initial stage of the iteration, $\{y_i\mid i<\kappa\}$ is everywhere unbounded. 

    Suppose for contradiction that in $V^{\mathbb{P}_\kappa}$, $\Phi$ is a nontrivial coherent family. 
    Then by Lemma \ref{unbdd_lem}, $\Phi\upharpoonright\{y_i\mid i<\kappa\}$ is nontrivial. 
    By an easy chain condition argument, there is a club $C\subseteq\kappa$ such that for all $\alpha\in C$, $\omega \alpha=\alpha$ and $\Phi^\alpha:=\Phi\upharpoonright\{y_i\mid i<\alpha\}\in V[G\upharpoonright \alpha]$. 
    \begin{claim}
        If $\alpha\in C$ has cofinality $\aleph_1$, then $\Phi^\alpha$ is trivial in $V[G\upharpoonright \alpha]$
    \end{claim}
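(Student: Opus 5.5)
We argue by contradiction: assuming $\Phi^\alpha$ is nontrivial in $V[G\upharpoonright\alpha]$, we build from it an $\omega_1$-indexed family satisfying the hypotheses of Lemma \ref{indestructible_lem}, and then use Lemma \ref{no_ext_lem} on the next $\omega$ stages of the iteration. The resulting conclusion, that $\Phi$ cannot be defined coherently at $y_\alpha$, contradicts the fact that $\Phi$ is defined there in the final model.

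\emph{Step 1: an everywhere unbounded $\omega_1$-sequence.} Work in $V[G\upharpoonright\alpha]$. Since $\operatorname{cof}(\alpha)=\aleph_1$ (and this is absolute, as the iteration is ccc), fix an increasing sequence $\langle\alpha_\xi\mid\xi<\omega_1\rangle$ cofinal in $\alpha$ and set $x_\xi:=y_{\alpha_\xi}$, so each $x_\xi$ is Cohen generic over $V[G\upharpoonright\alpha_\xi]$. First extend $\Phi^\alpha$ to the closure of its index set under finite meets, by letting the value at $\bigwedge_{i\in F}x_i$ be $\Phi_{x_{\min F}}\upharpoonright I(\bigwedge_{i\in F}x_i)$. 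This extension is coherent because $\Phi^\alpha$ is. It is nontrivial, since a trivialization of the extension restricts to a trivialization of $\Phi^\alpha$.

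\emph{Step 2: the hypothesis of Lemma \ref{indestructible_lem}.} Let $\Acal\in V[G\upharpoonright\alpha]$ be an uncountable family of pairwise disjoint finite subsets of $\omega_1$. Then $\Phi^\Acal$ is exactly the restriction of the extended family to $\{\bigwedge_{i\in F}x_i\mid F\in\Acal\}$. By Lemma \ref{unbdd_lem}, it suffices to show that this set is everywhere unbounded.

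So fix an infinite $E\subseteq\omega$ and $x\colon E\to\omega$. By the ccc and $\operatorname{cof}(\alpha)=\aleph_1$, both $E$ and $x$ lie in $V[G\upharpoonright\alpha_\eta]$ for some $\eta<\omega_1$. Since $\Acal$ is uncountable and its members are disjoint, some $F\in\Acal$ has $\min F>\eta$. The reals $x_i$ for $i\in F$ are added at successively later stages, each Cohen over the model containing $E$, $x$ and the earlier ones. By a density argument, applied inductively along $F$, the set $\{n\in E\mid x(n)\le x_i(n)\text{ for all }i\in F\}$ is infinite. Hence $\Phi^\Acal$ is nontrivial, and $\Phi^\alpha\upharpoonright\{x_\xi\mid\xi<\omega_1\}$ satisfies the hypothesis of Lemma \ref{indestructible_lem} in $V[G\upharpoonright\alpha]$.

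\emph{Step 3: contradiction.} Since $\omega\alpha=\alpha$, the real $y_\alpha$ is read off from the antichains chosen at stages $\alpha+n$, $n<\omega$. Over $V[G\upharpoonright\alpha]$, these $\omega$ stages form a finite support iteration of Knaster forcings, each with an infinite maximal antichain. The remainder of the iteration after stage $\alpha+\omega$ is Knaster over $V[G\upharpoonright(\alpha+\omega)]$. Lemma \ref{no_ext_lem} therefore says that in $V[G]$ the family $\langle\Phi_{x_\xi}\mid\xi<\omega_1\rangle$ does not extend to a coherent family indexed by $\{x_\xi\mid\xi<\omega_1\}\cup\{y_\alpha\}$. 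This contradicts the fact that $\Phi$, coherent in $V[G]$, is defined at $y_\alpha$.

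\emph{Expected obstacles.} There are two delicate points. The first is the everywhere-unboundedness of the finite meets in Step 2. It needs genuine mutual genericity of the $y_{\alpha_\xi}$, which relies on the observation preceding the claim that $y_i$ is Cohen over $V[G\upharpoonright i]$, and it needs every $E$ and $x$ to be captured at a bounded stage below $\alpha$, which is where $\operatorname{cof}(\alpha)=\aleph_1$ is used. The second is checking that $\Phi^y$ in Lemma \ref{no_ext_lem} is exactly $\Phi$ cut off under $y_\alpha$, so that the "in particular" clause of that lemma applies. If Step 2 resists the meet argument, I would instead verify condition \ref{eq} of Lemma \ref{destructible_equiv_lemma} directly by a $\Delta$-system argument on the supports.
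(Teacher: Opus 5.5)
Your proposal is correct and follows essentially the same route as the paper. You assume $\Phi^\alpha$ is nontrivial, use the Cohen genericity of the $y_{\alpha_\xi}$ along an $\omega_1$-sequence cofinal in $\alpha$ together with Lemma \ref{unbdd_lem} to verify the hypothesis of Lemma \ref{indestructible_lem} in $V[G\upharpoonright\alpha]$, and then apply Lemma \ref{no_ext_lem} to the stages $\alpha+n$ to contradict $\Phi$ being defined at $y_\alpha$; your extension of $\Phi^\alpha$ to finite meets and your capture of $E$ and $x$ at a bounded stage spell out details the paper leaves implicit.
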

    \begin{proof}
        Suppose not. 
        Let $\langle \beta_j\mid j<\omega_1\rangle$ be increasing and cofinal in $\alpha$. 
        By genericity, if $\Acal$ is an uncountable family of pairwise disjoint finite subsets of $\omega_1$, then 
        \[\left\{\bigwedge_{\beta\in F}y_{\beta_j}\right\}\]
        is everywhere unbounded in $V[G\upharpoonright i]$ so $\Phi^\Acal$ is nontrivial by Lemma \ref{unbdd_lem}. 
        But then in $V^{\Pbb_\kappa}$, since a finite support iteration of Knaster forcings is Knaster, Lemma \ref{no_ext_lem} implies that $\Phi^\alpha$ does not extend to a nontrivial coherent family indexed by $\{\beta_j\mid j<\omega_1\}\cup\{y_\alpha\}$, contradicting that $\Phi$ was a coherent family indexed by all of $\omega^\omega$.
    \end{proof}
    
    By the countable chain condition of the iteration and Fodor's lemma in $V^{\Pbb_\kappa}$, there is a stationary set $S\subseteq C\cap\operatorname{cof}(\omega_1)$ and a $\beta<\kappa$ such that for all $\alpha\in S$, $\Phi^\alpha$ is trivialized by an element of $V[G\upharpoonright\beta]$. 
    For each $\gamma\in S$, let $\Psi^\gamma$ be a trivialization of $\Psi$ in $V[G\upharpoonright\gamma]$. 
    The next claim completes the proof by showing that any $\Psi^\alpha$ trivializes $\Phi\upharpoonright\{y_i\mid i<\kappa\}$. 
    \begin{claim}
        If $\beta<\gamma<\delta$, there is an $n<\omega$ such that $\Psi^\gamma=^n\Psi^\delta$.
    \end{claim}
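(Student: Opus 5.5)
The plan is to derive this from the genericity of a single Cohen real $y_i$ with $\beta\le i<\gamma$ over $V[G\upharpoonright\beta]$. By the choice of $S$ and $\beta$, both $\Psi^\gamma$ and $\Psi^\delta$ may be taken to lie in $V[G\upharpoonright\beta]$. This is how I read the Fodor step: the trivializations of $\Phi^\gamma$ and $\Phi^\delta$ are chosen from $V[G\upharpoonright\beta]$. Both are functions $\omega\times\omega\to\ZZ$. Moreover, $\Psi^\gamma$ trivializes $\Phi^\gamma$ and $\Psi^\delta$ trivializes $\Phi^\delta\supseteq\Phi^\gamma$. So for every $i<\gamma$, each of them agrees with $\Phi_{y_i}$ on $I(y_i)$ up to a finite set. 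Consequently, for every $i<\gamma$, the set
\[D=\{(n,m)\in\omega\times\omega\mid \Psi^\gamma(n,m)\neq\Psi^\delta(n,m)\}\]
has finite intersection with $I(y_i)$. The key point is that $D\in V[G\upharpoonright\beta]$.

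Next let $E=\{n<\omega\mid \exists m\,((n,m)\in D)\}$ be the set of columns on which $D$ is nonempty. It suffices to show that $E$ is finite: then any $n>\max E$ witnesses $\Psi^\gamma=^n\Psi^\delta$, meaning they agree on all pairs with first coordinate at least $n$. Suppose instead that $E$ is infinite. Define $x\colon E\to\omega$ in $V[G\upharpoonright\beta]$ by $x(n)=\min\{m\mid (n,m)\in D\}$. Now pick an index $i$ with $\beta\le i<\gamma$; such an $i$ exists since $\beta<\gamma$. As observed at the start of the proof, $y_i$ is Cohen generic over $V[G\upharpoonright i]\supseteq V[G\upharpoonright\beta]$. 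A routine density argument then gives infinitely many $n\in E$ with $x(n)\le y_i(n)$. Each such $n$ contributes the point $(n,x(n))\in D\cap I(y_i)$, so this intersection is infinite, a contradiction. (Here I use $I(y)=\{(n,m)\mid m\le y(n)\}$, matching the convention in Lemma \ref{no_ext_lem}.)

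The main obstacle is really bookkeeping: ensuring that both $\Psi^\gamma$ and $\Psi^\delta$ live in the common model $V[G\upharpoonright\beta]$, and that some index $i<\gamma$ lies strictly above $\beta$ in the sense that $y_i$ is generic over that model. Both follow from the choice of $S$ and $\beta$ together with $\beta<\gamma$. Once that is in place, the Cohen-genericity computation is the same everywhere-unboundedness argument used earlier in the proof.

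Finally, the claim finishes the theorem as follows. Fix $\gamma\in S$. For any $i<\kappa$, choose $\delta\in S$ with $\delta>i,\gamma$. Then $\Psi^\gamma=^n\Psi^\delta$ for some $n$, and $\Psi^\delta$ agrees almost everywhere with $\Phi_{y_i}$ on $I(y_i)$. Hence $\Psi^\gamma$ also agrees almost everywhere with $\Phi_{y_i}$ on $I(y_i)$, since $I(y_i)$ meets the first $n$ columns in a finite set. So $\Psi^\gamma$ trivializes $\Phi\upharpoonright\{y_i\mid i<\kappa\}$, contradicting the nontriviality obtained from Lemma \ref{unbdd_lem}.
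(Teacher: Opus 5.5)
Your proof is correct and is essentially the paper's argument. You assume infinitely many columns contain a disagreement, use Cohen genericity of a real $y_i$ with $\beta\le i<\gamma$ (the paper takes $i=\beta$) over $V[G\upharpoonright\beta]$ to place infinitely many disagreements inside $I(y_i)$, and contradict the fact that both $\Psi^\gamma$ and $\Psi^\delta$ trivialize $\Phi^\gamma$. Your reading that both trivializations lie in $V[G\upharpoonright\beta]$ is exactly what the argument needs; the paper's ``$V[G\upharpoonright\gamma]$'' in the choice of $\Psi^\gamma$ is evidently a slip.
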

    \begin{proof}
        Suppose not. 
        Then there are infinitely many $n<\omega$ such that for some $m<\omega$, $\Psi^\gamma(n,m)\neq\Psi^\delta(n,m)$. 
        By genericity of $y_\beta$ over $V[G\upharpoonright\beta]$, there are infinitely many $n<\omega$ such that for some $m\leq y_\beta(m)$, $\Psi^\gamma(n,m)\neq\Psi^\delta(n,m)$. 
        In particular, $\Psi^\gamma$ and $\Psi^\delta$ cannot both trivialize a coherent family with an index containing $y_\beta$. 
        But they do both trivialize $\Phi^\gamma$, a contradiction. 
    \end{proof}
    
\end{proof}


\begin{thebibliography}{10}

\bibitem{ADLCM} N.~Bannister. {\em Additivity of derived limits in the Cohen model}, arXiv preprint: 2302.07222. To appear in Isr. J. Math.

\bibitem{strong_hom_add} N.~Bannister, J.~Bergfalk, J.~Tatch Moore. {\em On the additivity of strong homology
for locally compact separable metric spaces}, arXiv:2008.13089, February 22, 2021, 13pp.  

\bibitem{Baumgartner:PFA}
J.~E. Baumgartner.
{\em Applications of the proper forcing axiom},
Handbook of set-theoretic topology, 913--959.
North-Holland Publishing Co., Amsterdam, 1984.

\bibitem{BFZ}
J.E.~Baumgartner, R.~Frankiewicz, P.~Zbierski.
{\em Embedding of Boolean algebras in $\mathscr{P}(\omega)/\Fin$.}
Fund. Math. 136 (1990), no. 3, 187–192.

\bibitem{B17} J.~Bergfalk. {\em Strong homology, derived limits, and set theory}, Fund. Math. 236 (2017), no. 1, 17--28.

\bibitem{SVHDL} J.~Bergfalk, C.~Lambie-Hanson. {\em Simultaneously vanishing higher derived limits}, 
Forum Math, Pi 9 (2021), Paper no. e4, 31pp.

\bibitem{SVHDLwLC} J.~Bergfalk, M.~Hru\v{s}\'{a}k, C.~Lambie-Hanson. {\em Simultaneously vanishing higher derived limits without large cardinals}, arXiv:2102.06699, February 12, 2021, 30 pp.

\bibitem{BK} A.~K.~Bousfield, D.~M.~Kan. {\em Homotopy limits, completions and localizations}. Lecture Notes in Mathematics, Vol. 304. Springer-Verlag, Berlin-New York, 1972. v+348 pp.

\bibitem{CaEi} H.~Cartan, S.~Eilenberg, {\em Homological algebra}. With an appendix by David A. Buchsbaum. Princeton University Press, Princeton, NJ, 1956. xvi+390 pp.

\bibitem{SNVHDL} M.~Casarosa and C.~Lambie-Hanson. {\em Simultaneously nonvanishing higher derived limits}. 2024, arXiv preprint:2411.15856.

\bibitem{AnStacks} D.~Clausen and P.~Scholze. {\em Analytic Stacks.} \url{https://www.youtube.com/watch?v=EW39K0J7Hqo&list=PLx5f8IelFRgGmu6gmL-Kf_Rl_6Mm7juZO&index=4} 2024. Posted by 
Institut des Hautes Etudes Scientifiques (IHES). Accessed: 23 April 2026.

\bibitem{Connon} E.~Connon. {\em On $d$-dimensional cycles and the vanishing of simplicial homology}, arxiv:1211.7087, July 22, 2013, 19 pp.

\bibitem{DSV} A.~Dow, P.~Simon, J.~Vaughan. {\em Strong homology and the proper forcing
axiom}, Proc. Amer. Math. Soc. 106.3 (1989), 821–-828.


\bibitem{EiMo2} S.~Eilenberg, J.~C.~Moore. {\em Foundations of relative homological algebra}.  Mem. Amer. Math. Soc. 55 (1965), 39 pp.

\bibitem{EiMo} \bysame. {\em Limits and spectral sequences}. Topology 1 (1962), 1--23.

\bibitem{Gob} R\'{e}mi Goblot. {\em Sur les d\'{e}riv\'{e}s de certaines limites projectives. Applications aux modules}, Bull. Sci. Math. (2) 94 (1970), 251--255.

\bibitem{Hechler} S.~Hechler. {\em On the existence of certain cofinal subsets of $\omega^\omega$}. Axiomatic Set Theory, Proceedings of
Symposia in Pure Mathematics, Vol. 13, Part II (Amer. Mathematical Soc., Providence, RI, 1974) 155-
173. 

\bibitem{GJ} P.~Goerss, J.~Jardine. {\em Simplicial Homotopy Theory}. Birkh\"{a}user Verlag, Basel, 1999. xv+510 pp.

\bibitem{higher_infinite} A.~Kanamori. {\em The higher infinite. Large cardinals in set theory from their beginnings.}
Perspectives in Mathematical Logic. Springer-Verlag, Berlin, 1994. xxiv+536pp.

\bibitem{kechris} A.~Kechris. {\em Classical descriptive set theory.} Graduate Texts in Mathematics, 156. Springer-Verlag, New York, 1995. xviii+402 pp.

\bibitem{set_theory:kunen} K.~Kunen. {\em Set theory. An introduction to independence proofs.} Studies in Logic and the Foundations of Mathematics, 102. North-Holland Publishing Co., Amsterdam-New York, 1980. xvi+313 pp.

\bibitem{Laver}
R.~Laver.
{\em Linear orders in 
$(\omega)^\omega$ under eventual 
dominance.}
Logic Colloquium '78 (Mons, 1978), pp. 299--302
Stud. Logic Found. Math., 97
North-Holland Publishing Co., Amsterdam-New York, 1979.

\bibitem{Ker} J.~Lurie. {\em Kerodon}. Online 2018 text: \url{https://kerodon.net}, accessed Jan 2023.

\bibitem{SHINA} S.~Marde\v{s}i\'{c} and A.~V.~Prasolov. {\em Strong homology is not additive}, Trans. Amer. Math. Soc. 307.2 (1988), 725--744.

\bibitem{MaPo} J.~P.~May, K.~Ponto. {\em More concise algebraic topology. Localization, completion, and model categories}. Chicago Lectures in Mathematics. University of Chicago Press, Chicago, IL, 2012. xxviii+514 pp.

\bibitem{McG} C.~A.~McGibbon. {\em Phantom maps}, Handbook of algebraic topology, 1209–-1257, North-Holland, Amsterdam, 1995.

\bibitem{Mil} J.~Milnor. {\em On axiomatic homology theory}.  Pacific J. Math. 12 (1962), 337--341.

\bibitem{Mitch} B.~Mitchell. {\em Rings with several objects}, Advances in Math. 8 (1972), 1--161.

\bibitem{Nob}  G.~N\"{o}beling. \emph{\"{U}ber die Derivierten des Inversen und des direkten Limes einer Modulfamilie}, Topology 1 (1962), 47--61.

\bibitem{Oso} B.~L.~Osofsky. {\em The subscript of $\aleph_n$, projective dimension, and the vanishing of ${\lim}^n$}, Bull. Amer. Math. Soc. 80 (1974), 8--26.

\bibitem{Qui} D.~Quillen. {\em Homotopical algebra.} Lecture Notes in Mathematics 43. Springer, Berlin, 1967.

\bibitem{Roos} J.~E.~Roos. {\em Sur les foncteurs d\'{e}riv\'{e}s des lim.} C.~R.~Acad. Sci. Paris 252 (1961), 3702--3704.

\bibitem{Kamo} S.~Kamo, Almost coinciding families and gaps in $P(\omega)$, J. Math. Soc. Japan 45 (1993),
no. 2, 357–368.

\bibitem{SSH} S.~Marde\v{s}i\'{c}. {\em Strong shape and homology.}
Springer Monographs in Mathematics. Springer-Verlag, Berlin, 2000. xii+489 pp.

\bibitem{Pra} A.~V.~Prasolov. {\em Non-additivity of strong homology}, Topology Appl. 153 (2005), 493--527.

\bibitem{Ste} N.~E.~Steenrod. {\em Regular cycles of compact metric spaces}. Ann. of Math. 41 (1940), 833--851.

\bibitem{Scheepers}
M.~Scheepers. {\em Gaps in $\omega^\omega$}, Set theory of the reals (Ramat Gan, 1991).
Israel Math. Conf. Proc., 6,  439--561.

\bibitem{To98} S.~Todorcevic. {\em The first derived limit and compactly $F_\sigma$ sets}, J. Math. Soc. Japan 50 (1998), no. 4, 831–836.

\bibitem{NVHDL}
B.~Veli\v{c}kovi\'c, A.~Vignati. 
{\em Non-vanishing higher derived limits.}
\newblock Communications in Contemporary Mathematics 16.7 (2023), World Scientific Publishing Company.

\bibitem{Wei}  C.~A.~Weibel. {\em An introduction to homological algebra}. Cambridge Studies in Advanced Mathematics, 38. Cambridge University Press, Cambridge, 1994. xiv+450 pp.

\bibitem{Yeh}  Z.~Z.~Yeh. {\em Higher inverse limits and homology theories.} Thesis (Ph.D.), Princeton University, 1959. 73 pp.

\bibitem{forcing_idealized} J.~Zapletal. {\em Forcing Idealized.} Cambridge Tracts in Mathematics, 174. Cambridge University Press, Cambridge, 2008. vi+314 pp.

\end{thebibliography}
\end{document}